\documentclass[reqno,12pt]{amsart}

\usepackage[T1]{fontenc}
\usepackage{amssymb}
\usepackage{amssymb, latexsym}
\usepackage{hyperref}
\usepackage{amsmath}
\usepackage{amscd}
\usepackage{enumerate}
\usepackage{amsfonts}
\usepackage{graphicx}
\usepackage[all,cmtip]{xy}
\usepackage{mathrsfs}
\usepackage{slashed}
\usepackage{xcolor}
\usepackage{bm}
\usepackage[numbers,sort&compress]{natbib}
\usepackage{appendix}


\setlength{\marginparwidth}{.8in} \setlength{\textheight}{22cm}
\setlength{\oddsidemargin}{0.09in}
\setlength{\evensidemargin}{0.10in}
\setlength{\textwidth}{15.6cm}
\setlength{\topmargin}{0in} \setlength{\headheight}{0.18in}
\setlength{\marginparwidth}{1.0in}
\def\ge{\geqslant}

\def\geq{\geqslant}
\def\leq{\leqslant}

\allowdisplaybreaks


\renewcommand{\Im}{\mathop{\mathrm{Im}}}

\newcommand{\C}{{\mathbb{C}}}
\newcommand{\R}{{\mathbb{R}}}
\newcommand{\N}{{\mathbb{N}}}

\newcommand{\eps}{{\varepsilon}}

\newcommand{\HHH}{\mathcal{H}}
\newcommand{\KKK}{\mathcal{K}}

\newcommand{\MMM}{\mathcal{M}}
\newcommand{\EEE}{\mathcal{E}}
\newcommand{\QQQ}{\mathcal{Q}}

\makeatletter

\newcommand{\Rmnum}[1]{\expandafter\@slowromancap\romannumeral #1@}
\makeatother

\theoremstyle{plain}
\newtheorem{theorem}{Theorem}
\newtheorem{proposition}[theorem]{Proposition}
\newtheorem{lemma}[theorem]{Lemma}

\theoremstyle{definition}

\newtheorem{remark}[theorem]{Remark}

\numberwithin{equation}{section}
\numberwithin{theorem}{section}

\numberwithin{equation}{section}

\allowdisplaybreaks
\begin{document}


\title[The dynamics of the focusing NLH with a potential]{The dynamics of the focusing NLH with a potential beyond the mass-energy threshold }

\author[S. Ji]{Shuang Ji}
\address{College of Science, China Agricultural University, \ Beijing, \ China, \ 100193}
\email{jishuang@cau.edu.cn}

\author[J. Lu]{Jing Lu}
\address{College of Science, China Agricultural University, \ Beijing, \ China, \ 100193 }
\email{lujing326@126.com}

\author[F. Meng]{Fanfei Meng}
\address{Qiyuan Lab, \ Tsinghua University, \ Beijing, \ China, \ 100095}
\email{mengfanfei17@gscaep.ac.cn}


\date{\today}

\keywords{Kato  potential, global existence, blow up, virial identity.}

\begin{abstract}\noindent In this paper, 
we study the dynamics of the focusing nonlinear Hartree equation with a Kato potential 
$$
i\partial_t u +\Delta u - Vu = -(|\cdot|^{-\gamma} \ast |u|^2)u, \quad x \in \R^d
$$
under some assumptions on the potential $V$. We prove the blow up versus global existence dichotomy for
solutions beyond the threshold, based on the method from Duyckaerts-Roudenko \cite{dr}. 
 Furthermore, our result compensates for the one of in \cite{jl} below that threshold.

\end{abstract}

\maketitle

\section{Introduction}
In this paper, we study the Cauchy problem of the focusing nonlinear Hartree equation with a potential 
\begin{align}\label{NLHv}\tag{$\text{NLH}_{\text{V}}$}
    \left\{ \aligned
   & i\partial_t u +\Delta u - Vu = -(|\cdot|^{-\gamma} \ast |u|^2)u, \\
   & u(0)=u_0\in {H}_x^1(\R^d),
   \endaligned
  \right. \qquad (t, x) \in \R \times \R^d,
\end{align}
where $ u: \R \times \R^d \rightarrow \C $ is the wave function, $ V : \R^d \to \R $ is a real-valued potential and $ \ast $ denotes the convolution of spacial variable. 
Here we consider the energy-subcritical case, that is $2<\gamma< \min\{4,d\}$. 


We denote that $\HHH := -\Delta + V$ and
\begin{equation*}
    \Vert f \Vert_{\dot{H}_V^1(\R^d)}^2:=<\HHH f,f>=\int_{\R^d} | \nabla f|^2dx+\int_{\R^d} V|f|^2dx,\quad f\in H_x^1(\R^d).
\end{equation*}

The solution to \eqref{NLHv} satisfies the laws of mass conservation and energy conservation, which can be expressed respectively by 
\begin{equation}\label{mass}
	M(u) = \int_{\R^d} \vert u(t,x) \vert ^2 dx = M(u_0),
\end{equation}
\begin{equation}\label{energy}
	E_V(u) = \frac{1}{2} \Vert u \Vert_{\dot{H}_V^1(\R^d)}^2  - \frac{1}{4}P(u) = E_V(u_0),
\end{equation}
where
\begin{equation}\label{momentum}
    P(u)=\int_{\R^d}\int_{\R^d} \frac{|u(x)|^2 |u(y)|^2}{|x-y|^{\gamma}}dxdy=\int_{\R^d} (|x|^{-\gamma} \ast |u|^2)|u|^2 dx.
\end{equation} 




Before stating our main results, we first recall the literature of the focusing nonlinear Hartree equations with no potential
\begin{align}\label{NLH0}\tag{$\text{NLH}_{\text{0}}$}
    i\partial_t u +\Delta u = \mu(|\cdot|^{-\gamma} \ast |u|^2)u, \quad 2<\gamma <\min \{ 4,d \},\quad x \in \R^d,
\end{align}
which is called focusing if $ \mu < 0 $ and defocusing if $ \mu > 0 $. 

In \cite{gw}, Gao-Wu investigated the focusing $\eqref{NLH0}$ for $\gamma=3$ with $u_0 \in H_x^1(\R^d)$  by concentration compactness method. 
Denote the mass-energy threshold as 
$M (Q)E_0(Q),$ 
where $Q$ is the ground state of the equation \eqref{NLH0} and
$$E_0(u)
=\frac{1}{2} \int_{\R^d}|\nabla u|^2dx - \frac{1}{4}P(u) 
=E_0(u_0).$$
Their results are as follows.
\begin{theorem}[Sub-threshold dynamics for $\eqref{NLH0}$, Gao-Wu \cite{gw}]
    For $(\gamma,d)=(3,5)$, let the radial $u_0 \in H_x^1(\R^d)$ and satisfy
    $M (u_0)E_0(u_0) < M (Q)E_0(Q)$.
    \begin{enumerate}[\rm (i)]
      \item  If $\Vert \nabla u_0 \Vert_{L_x^2(\R^d)}||u_0||_{L_x^2(\R^d)}<||\nabla Q||_{L_x^2(\R^d)}||Q||_{L_x^2(\R^d)}$,
        then the solution is global and scatters. 
	\item  If $\Vert \nabla u_0 \Vert_{L_x^2(\R^d)}||u_0||_{L_x^2(\R^d)} > ||\nabla Q||_{L_x^2(\R^d)}||Q||_{L_x^2(\R^d)}$,
	then the solution will blow up in finite time.
 \end{enumerate}
\end{theorem}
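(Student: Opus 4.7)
The plan is to follow the Kenig-Merle concentration-compactness-rigidity road map, tailored to the Hartree nonlinearity $-(|x|^{-3}\ast|u|^{2})u$ in dimension five. The starting point is a sharp Gagliardo-Nirenberg type inequality; scaling and homogeneity pin down the form
\[
   P(u) \le C_{\mathrm{GN}}\,\|u\|_{L^{2}}\,\|\nabla u\|_{L^{2}}^{3},
\]
and $Q$ realizes the optimal constant, so that $M(Q)E_{0}(Q)$ can be written explicitly in terms of $C_{\mathrm{GN}}$. Considering the envelope $f(y) = \tfrac12 y^{2} - \tfrac{C_{\mathrm{GN}}}{4}\|u_{0}\|_{L^{2}}\,y^{3}$ and invoking mass/energy conservation, one obtains the standard trapping dichotomy: the sub-threshold assumption $M(u_{0})E_{0}(u_{0})<M(Q)E_{0}(Q)$, together with a strict inequality $\|u_{0}\|_{L^{2}}\|\nabla u_{0}\|_{L^{2}} < $ (resp.\ $>$) $\|Q\|_{L^{2}}\|\nabla Q\|_{L^{2}}$, persists for all times in the maximal existence interval by continuity. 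Thus the two sets in (i)-(ii) are invariant under the flow.

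For part (i), global existence in $H^{1}$ is immediate, since the sub-threshold trapping yields a uniform-in-$t$ bound on $\|\nabla u(t)\|_{L^{2}}$, ruling out finite-time blow-up. Scattering is the heart of the matter. I would first establish a small-data scattering theory in Strichartz spaces for the Hartree equation, using Hardy-Littlewood-Sobolev to handle the convolution together with a long-time perturbation (stability) lemma. Assuming scattering fails on the sub-threshold region, I would take the infimum of $M(u)E_{0}(u)$ over non-scattering sub-threshold solutions, and use an $H^{1}$ linear profile decomposition to extract a minimal non-scattering solution $u_{c}$, the critical element. Pythagorean decomposition of mass, kinetic energy, and potential energy, combined with the sub-threshold variational estimates and nonlinear perturbation theory, forces exactly one profile to survive and the orbit $\{u_{c}(t)\}$ to be precompact in $H^{1}(\R^{5})$ modulo symmetries; radiality of $u_{c}$ removes the translation parameter.

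The rigidity step proves $u_{c}\equiv 0$, producing a contradiction. Let $\chi_{R}(x)=R^{2}\chi(|x|/R)$ with $\chi$ smooth, equal to $r^{2}$ on $\{r\le 1\}$ and bounded outside, and set $V_{R}(t)=\int\chi_{R}\,|u|^{2}\,dx$. A direct computation yields
\[
   V_{R}''(t) = 8\,\|\nabla u(t)\|_{L^{2}}^{2} - 6\,P(u(t)) + \mathcal{E}_{R}(t),
\]
where the error term $\mathcal{E}_{R}(t)$ is supported on $\{|x|\ge R\}$. Precompactness of the orbit together with the radial Sobolev embedding make $\mathcal{E}_{R}(t)$ arbitrarily small uniformly in $t$, while the sub-threshold trapping and the sharp Gagliardo-Nirenberg force $8\|\nabla u\|_{L^{2}}^{2}-6P(u)\ge\delta>0$ unless $u\equiv 0$. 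Integrating in time then contradicts the bound $V_{R}(t)\lesssim R^{2}$.

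For part (ii), the same localized virial applies, but now super-threshold trapping gives $8\|\nabla u\|_{L^{2}}^{2}-6P(u)\le -\eta<0$ uniformly in $t$. Radiality lets one control $\mathcal{E}_{R}(t)$ without any finite-variance assumption on $u_{0}$, so that $V_{R}''(t)\le -\eta/2$ for $R$ large; a convexity argument then drives $V_{R}$ to become negative in finite time, contradicting positivity and forcing finite-time blow-up. The principal obstacle throughout is the non-locality of the Hartree term: it complicates the Strichartz and nonlinear estimates, the Pythagorean identities driving the profile decomposition, and especially the pointwise-in-$t$ control of $\mathcal{E}_{R}(t)$. Once the small-data and perturbation theory for the Hartree nonlinearity are in hand, the extraction of the critical element and the concluding virial arguments are largely mechanical.
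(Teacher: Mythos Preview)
The paper does not prove this theorem; it is quoted verbatim as a known result of Gao--Wu \cite{gw} and serves only as background for the new results on \eqref{NLHv}. There is therefore no ``paper's own proof'' to compare against. What the paper does say is that Gao--Wu obtained this dichotomy ``by concentration compactness method,'' and your proposal is precisely a sketch of that Kenig--Merle road map adapted to the Hartree nonlinearity: sharp Gagliardo--Nirenberg with $Q$ as extremizer, coercivity/trapping under the sub-threshold hypothesis, small-data and stability theory, profile decomposition to extract a minimal non-scattering solution with precompact orbit, and a localized virial rigidity (and, for (ii), a localized virial blow-up argument using radiality). So your outline matches the approach the paper attributes to \cite{gw}; there is nothing further to reconcile.
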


For the case beyond the threshold, Yang-Li \cite{ylwc} have established a dichotomy of blow up versus global existence for $\eqref{NLH0}$ by the method from Duyckaerts-Roudenko \cite{dr}. 
Their results are as follows.
\begin{theorem}[Super-threshold dynamics for $\eqref{NLH0}$, Yang-Li \cite{ylwc}] 
For $s_c=\frac{\gamma-2}{2}$ and $d \geq 3$, assume $I(0)<\infty$, $M (u_0)E_0(u_0) > M (Q)E_0(Q)$ and 
    \begin{align*}
    \biggl(\frac{M (u_0)E_0(u_0)}{M (Q)E_0(Q)}\biggr) \biggl( 1 - \frac{(I'(0))^2}{32E_0(u_0)I(0)} \biggr)\leq 1.
\end{align*}
\begin{enumerate}[\rm (i)]
    \item If $ \bigl[M(u_0)\bigr]^{1-s_c}\bigl[P(u_0)\bigr]^{s_c} > \bigl[M(Q)\bigr]^{1-s_c}\bigr[P(Q)\bigl]^{s_c}$ and $I'(0)\leq 0$,
    then the solution blows up in finite time.
    \item If $\bigl[M(u_0)\bigr]^{1-s_c}\bigl[P(u_0)\bigr]^{s_c} < \bigl[M(Q)\bigr]^{1-s_c}\bigr[P(Q)\bigl]^{s_c}$ and $I'(0) \geq 0$,
    then the solution exists globally. 
\end{enumerate}
\end{theorem}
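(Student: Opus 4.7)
The plan is to follow the Duyckaerts--Roudenko super-threshold scheme \cite{dr} adapted to the Hartree nonlinearity. The main tool is the second virial identity for $I(t)=\int_{\R^d}|x|^2|u(t,x)|^2\,dx$, which along \eqref{NLH0} reads
\[
I''(t)=8\|\nabla u(t)\|_{L_x^2}^2-4\gamma P(u(t)),
\]
equivalent, via energy conservation, to an affine combination of $E_0(u_0)$ and $\|\nabla u(t)\|_{L_x^2}^2$ alone. This must be combined with the Pohozaev identity for the ground state $Q$ and the sharp Gagliardo--Nirenberg inequality $P(f)\le C_{\mathrm{GN}}\|\nabla f\|_{L_x^2}^{\gamma}\|f\|_{L_x^2}^{4-\gamma}$ (attained by $Q$), which together pin down the critical value of the scaling-invariant quantity $[M(f)]^{1-s_c}[P(f)]^{s_c}$.

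The first step is to convert the refined threshold assumption, rewritten as
\[
M(u_0)\Bigl(E_0(u_0)-\frac{(I'(0))^2}{32\,I(0)}\Bigr)\le M(Q)E_0(Q),
\]
into a trapping statement for the flow. One recognizes the left-hand side as a \emph{subtracted} mass--energy, while the combination $2I(t)I''(t)-(I'(t))^2$ governs the second derivative of $\sqrt{I(t)}$. A differential comparison using the virial identity above shows that this subtracted mass--energy stays bounded by $M(Q)E_0(Q)$ throughout the lifespan; combined with the sharp Gagliardo--Nirenberg inequality, this leaves a forbidden gap around the critical value of $[M(u_0)]^{1-s_c}[P(u(t))]^{s_c}$, and $H^1$-continuity of the flow prevents crossing. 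Hence the sign of $[M(u_0)]^{1-s_c}[P(u(t))]^{s_c}-[M(Q)]^{1-s_c}[P(Q)]^{s_c}$ is propagated for all $t$ in the maximal interval of existence.

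Granted this trapping, both conclusions follow quickly. In case (i), $P(u(t))$ exceeds the critical value by a positive gap for all $t$, so the virial identity yields $I''(t)\le -c<0$ uniformly; combined with $I'(0)\le 0$, the standard Glassey concavity argument forces $I(t)\to 0$ in finite time, and hence $\|\nabla u(t)\|_{L_x^2}\to\infty$ via the uncertainty principle. In case (ii), the opposite trapping together with energy conservation yields a uniform $H^1$ bound, and the local well-posedness theory iterates to give global existence.

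The main obstacle is the trapping step: the ordinary mass--energy is supercritical, so one cannot close the argument directly from conservation of $M(u)E_0(u)$. The delicate point is the differential inequality for the subtracted mass--energy, whose algebra must be carried out within the non-local convolution framework of the Hartree nonlinearity; verifying that the Duyckaerts--Roudenko identities close under this non-locality---in particular the Pohozaev bookkeeping and the sharp Gagliardo--Nirenberg constants for the convolution term---is the technical heart of the argument. Everything else is a relatively routine adaptation of the sub-threshold machinery of \cite{gw} applied on each side of the propagated gap.
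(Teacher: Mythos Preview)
The statement is cited from Yang--Li and not proved directly in the paper, but the paper's Section~3 proves the more general version with a potential; specializing to $V\equiv0$ there gives exactly this result, so that is the relevant comparison.

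Your high-level plan is the correct Duyckaerts--Roudenko scheme, but there is a genuine gap in case~(i). You write that once trapping holds, ``the virial identity yields $I''(t)\le -c<0$ uniformly; combined with $I'(0)\le 0$, the standard Glassey concavity argument forces $I(t)\to 0$.'' This is precisely what \emph{fails} above threshold and is the reason the argument must go through $z(t)=\sqrt{I(t)}$ rather than $I(t)$. From $I''(t)=16E_0(u_0)-2(\gamma-2)P(u(t))$ one sees that $I''<0$ requires $P(u(t))>\tfrac{8}{\gamma-2}E_0(u_0)$; but $E_0(u_0)$ is \emph{above} threshold, while trapping only pins $P(u(t))$ above the mass-rescaled ground-state value, which corresponds to $\tfrac{8}{\gamma-2}E_0(Q)$ and is strictly smaller. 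In the paper's proof one only obtains $I''(t)<x_0$ with $x_0>0$ (encoding the super-threshold gap), and the conclusion is $z''(t)=\tfrac{1}{z}\bigl(\tfrac{I''}{2}-(z')^2\bigr)<\tfrac{1}{z}\bigl(\tfrac{x_0}{2}-\tfrac{x_0}{2}\bigr)=0$: it is concavity of $z$, not of $I$, that drives the blow-up. Your sketch never returns to $z$ after mentioning it, and the Glassey step as you state it does not close.

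Your trapping step is also under-specified in a way that hides the real work. The assertion that the subtracted mass--energy ``stays bounded by $M(Q)E_0(Q)$ throughout the lifespan'' via a ``differential comparison'' is circular as written: controlling that subtracted quantity is equivalent to a lower bound on $(z'(t))^2$, which is exactly what you are trying to propagate. The paper breaks this circle through Lemma~\ref{Lim2}, obtained by applying the sharp Gagliardo--Nirenberg inequality to the modulated function $e^{i\lambda|x|^2}u$ and optimizing in $\lambda$; this yields $[z'(t)]^2\le 4f(\widetilde{I''(t)})$ for an explicit function $f$ with minimum $f(x_0)=x_0/8$, and the assumptions become $[z'(0)]^2\ge 4f(x_0)$ together with $\widetilde{I''(0)}$ on the correct side of $x_0$. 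A continuity/bootstrap argument on $z'$ (the claims \eqref{Cz''0} and \eqref{Cz'}) then propagates both the sign of $z''$ and the trapping simultaneously. This refined inequality for the modulated data is the technical heart of the argument, and your proposal does not mention it.
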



For $\eqref{NLH0}$, Miao-Xu-Zhao studied the well-posedness, ill-posedness, the sharp local well-posedness and the global existence in \cite{mxz2}.
After that, they further established the global well-posedness and scattering criteria when $\gamma=2$ below the threshold \cite{mxz1}.
Li-Miao-Zhang \cite{lmz} proved that the solution scatters in both time directions when $\gamma=4$ and $d \geq 5$.
A similar result was obtained for spherically symmetric initial data in \cite{mxz}.
Meng \cite{meng} used the radial Sobolev embedding and a virial-Morawetz type estimate to study the scattering result of $\eqref{NLH0}$ with $(\gamma,d)=(3,5)$,
instead of Kenig-Merle's concentrated compactness methods \cite{KM,KM1}. 
In particular, Miao-Wu-Xu \cite{mwx} continued the study on the dynamics of the radial solutions at the  energy threshold.


Recently, more and more mathematicians have been considerably interested in the dispersive equations with different potentials, 
which are of paramount physical importance. 
For example, the Coulomb potential describes the coulomb force between two charged particles in quantum mechanical terms, 
and the inverse-square potential reveals that the intensity of the action between particles decays linearly with the square of the distance.

There are also many studies on the focusing nonlinear Schr\"odinger equations with a potential $V$.
Firstly, Rodnianski-Schlag \cite{rs} established the dispersive estimates for solutions to the linear Schr\"odinger equation in 3D with a time-dependent potential. 
Later, more and more mathematicians studied the nonlinear Schr\"odinger equation with a real-valued potential $V$ 
\begin{align}\label{NLSv}\tag{$\text{NLS}_{\text{V}}$}
    i\partial_t u +\Delta u -Vu + |u|^{p-1}u=0, 
    \quad x \in \R^d.
\end{align}
For the case that $V$ is the inverse-square potential $a /|x|^2$ in \eqref{NLSv}, 
Killip-Murphy-Visan-Zheng \cite{kmvz} obtained the scattering and blow up dichotomy below the threshold with $a>-\frac{1}{4}$ and $p=3$ in $\R^3$. 
Then Lu-Miao-Murphy \cite{lmm} extended their results to dimensions $3 \leq d \leq 6$.
Based on the method from Duyckaerts-Roudenko \cite{dr}, Deng-Lu-Meng \cite{dlm1} further extended the results of \cite{kmvz} to beyond the threshold.
For the case that $V$ is the Coulomb potential $K|x|^{-1}$ in \eqref{NLSv}, Miao-Zhang-Zheng \cite{mzz} proved the global existence when $K>0$, and the scattering theory when $K<0$ for in $\R^3$.

In particular, we focus on the case that $V$ is the Kato potential with a small negative part denoted as 
\[V_{-}(x):= \min\{V(x),0\}. \]
To be precise, we define that the potential class $\KKK_0$ is the closed space of bounded and compactly supported functions endowed with the Kato norm
\[
\Vert V \Vert_{\KKK} := \Vert (-\Delta)^{-1}V) \Vert_{L_x^{\infty}}.
\]
In order to prove the scattering result for \eqref{NLSv} with $(p,d)=(3,3)$ below the mass-energy threshold, Hong used the concentration compactness method and introduced the following customized assumptions of Kato potential V in \cite{hong} 
\begin{equation}\label{v31}
    V \in \KKK_0 \cap L_x^{\frac{3}{2}}(\R^3),
\end{equation}
and 
\begin{equation}\label{v32}
    \Vert V_{-}(x) \Vert_{\KKK} < 4\pi,
\end{equation} 
Hamano-Ikeda \cite{hi} studied the energy scattering below the threshold from Dodson-Murphy \cite{dm} and the blow up criteria based on an argument of Du-Wu-Zhang \cite{dwz} when $p>1$.
Later, Wang \cite{wangying} investigated the blow up and scattering results of \eqref{NLSv} beyond the mass-energy threshold when $p=3$.


Inspired by their results, we want to expand the range of $V$ to higher dimensions $d \geq 3$. We 
assume that the Kato potential $V$ satisfies
\begin{equation}\label{v0}
    V \in \KKK_0 \cap L_x^{\frac{d}{2}}(\R^d),
\end{equation}
and
\begin{equation}\label{v1}
    \Vert V_{-}(x) \Vert_{\KKK} < \frac{1}{C_d},
\end{equation}
where $C_d=\Gamma(\frac d2)\big/\bigl[(2-d)2\pi^{\frac d2 }\bigr]$.
\begin{remark}
    For the sake of clarity, we explain why the Kato potential $V$ need to satisfy \eqref{v0} and \eqref{v1}.
    $V\in\KKK_0$ and $\Vert V_{-}(x) \Vert_{\KKK} < 1/C_d$ ensure that the operator $\HHH$ is positive, which implies that $\HHH$ is equivalent to $-\Delta$ in the sense of 
    \begin{align*}
        (1- C_d\Vert V_{-} \Vert_{\KKK})\Vert \nabla u \Vert_{L_x^2(\R^d)}^2 
        \leq \Vert \HHH^{\frac12} u \Vert_{L_x^2(\R^d)}^2 
        \leq  (1 + C_d\Vert V \Vert_{\KKK})\Vert \nabla u \Vert_{L_x^2(\R^d)}^2 
    \end{align*}
    (see the specific proof in Lemma \ref{A1}).
    $V\in L_x^{d/2}(\R^d)$ is an additional condition for proving such the equivalence of the Sobolev norm in the sense of
    $
    \|f\|_{{\dot{H}}_V^{1}(\R^d)} \sim \|f\|_{\dot{H}_x^{1}(\R^d)}
    $
    when $ d \ge 3 $ (see details in \cite{hong}).
    Then from Beceanu-Goldberg \cite{bg}, the operator $\HHH$ has no eigenvalues or nonnegative resonance.
    Therefore the dispersive estimates and Strichartz estimates are valid by Ionescu-Jerison \cite{ij}, providing a robust foundation for the dichotomy of blow up versus global existence.
\end{remark}

Next we review our previous work and demonstrate our main results in this paper.
For completeness and logic, let’s start with a discussion of the Kato potential $V$ and the mass-energy threshold. 
Hong \cite{hong} has shown a stimulating discussion about the existence of ground state in \eqref{NLSv}. Later, for the nonlinear Hartree equation with the inverse-potential,  
Chen-Lu-Meng \cite{clm} also discussed the existence of ground state. 
Based on their methods,  we give the similar variational characterization of \eqref{NLHv} as follows.
\begin{proposition}[Variational properties]\label{Pq}
Suppose $V$ satisfies \eqref{v0} and \eqref{v1}.
\begin{enumerate}[\rm (i)]
    \item If $V_{-}=0$, then the sequence ${Q(\cdot -n)}_{n\in \N}$ maximizes $W(u)$, where $Q$ is the ground state of the elliptic equation
    \begin{equation}\label{elliptic}
        \Delta Q-Q+(|\cdot|^{-\gamma} \ast |Q|^2)Q=0,
    \end{equation}
    and $$W(u)=\frac{P(u)}{\Vert u \Vert_{\dot{H}_V^1(\R^d)}^{\gamma} \Vert u \Vert_{L_x^2(\R^d)}^{4-\gamma}}.$$
    \item If $V_{-} \neq 0$, then there exists a maximizer $\QQQ \in H_x^1(\R^d)$ solving the elliptic equation
    \begin{align}\label{elliptic1}
      (-\Delta+V) \QQQ + w_{\QQQ}^2 \QQQ - (|\cdot|^{-\gamma} \ast |\QQQ|^2)|\QQQ=0, 
      \quad w_{\QQQ}^2= \frac{(4-\gamma)\Vert \QQQ \Vert_{\dot{H}_V^1(\R^d)}^2}{\gamma\Vert \QQQ \Vert_{L_x^2(\R^d)}^2}.
    \end{align}
\end{enumerate}
\end{proposition}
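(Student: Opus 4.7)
The plan is to follow the variational framework of Hong \cite{hong} and Chen-Lu-Meng \cite{clm}, adapted to the Kato-potential Hartree setting, exploiting the scale invariance $W(\mu u)=W(u)$ for $\mu>0$ together with translation invariance of $P$ and $\|\cdot\|_{L_x^2}$.

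For part (i), since $V_{-}=0$ forces $V\ge 0$ pointwise, one has $\|u\|_{\dot{H}_V^1}^2\ge\|\nabla u\|_{L_x^2}^2$ and consequently $W(u)\le W_0(u):=P(u)/(\|\nabla u\|_{L_x^2}^{\gamma}\|u\|_{L_x^2}^{4-\gamma})$. The classical Weinstein argument for \eqref{elliptic} gives $\sup W_0 = W_0(Q)$, so $\sup W\le W_0(Q)$. For the matching lower bound I will test with $u_n:=Q(\cdot-n)$: translation preserves both $P(u_n)$ and $\|u_n\|_{L_x^2}$, while $\int V|Q(\cdot-n)|^2\,dx\to 0$ as $|n|\to\infty$ by H\"older using $V\in L_x^{d/2}$ and the rapid decay of $Q$. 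Hence $\|u_n\|_{\dot{H}_V^1}^2\to\|\nabla Q\|_{L_x^2}^2$ and $W(u_n)\to W_0(Q)$, so the translates form the claimed maximizing sequence.

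For part (ii), the first step is to establish the strict gap $\sup W > W_0(Q)$ when $V_{-}\neq 0$: choose $x_0$ so that the bulk of $|Q(\cdot-x_0)|^2$ overlaps $\{V<0\}$ enough to give $\int V|Q(\cdot-x_0)|^2\,dx<0$, which forces $W(Q(\cdot-x_0))>W_0(Q)$. Next I will take a maximizing sequence $\{u_n\}\subset H_x^1$, normalized by scale invariance so that $\|u_n\|_{L_x^2}=\|u_n\|_{\dot{H}_V^1}=1$ and $P(u_n)\to\sup W$. A Lions-type concentration-compactness argument produces, up to subsequences and translations $y_n\in\R^d$, a weak limit $u_n(\cdot-y_n)\rightharpoonup\QQQ$ in $H_x^1$; vanishing is ruled out by $P(u_n)\to\sup W>0$. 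If $|y_n|\to\infty$ then $\int V|u_n|^2\,dx\to 0$ (using $V\in L_x^{d/2}$ and tightness of $|u_n|^2$ near $y_n$), forcing $W(u_n)\le W_0(u_n)+o(1)\le W_0(Q)+o(1)$ and contradicting the strict gap; so $y_n$ stays bounded and we may take $y_n=0$. Weak lower semicontinuity of the denominator together with strong local $L^p$-convergence (Rellich-Kondrachov) and Hardy-Littlewood-Sobolev for the nonlocal term $P$ then yield $W(\QQQ)\ge\sup W$, so $\QQQ$ is the desired maximizer.

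Finally the Euler-Lagrange equation for $W$ at $\QQQ$ takes the form $\lambda_1\HHH\QQQ+\lambda_2\QQQ=(|\cdot|^{-\gamma}\ast|\QQQ|^2)\QQQ$ with Lagrange multipliers $\lambda_1,\lambda_2>0$ obtained by differentiating numerator and denominator of $W(\QQQ+t\varphi)$ at $t=0$; pairing with $\QQQ$ and matching with the scaling derivative pins down $\lambda_2/\lambda_1=(4-\gamma)\|\QQQ\|_{\dot{H}_V^1}^2/(\gamma\|\QQQ\|_{L_x^2}^2)=w_\QQQ^2$, and a final rescaling $\QQQ\mapsto\mu\QQQ$ (which preserves both $W$ and $w_\QQQ^2$) normalizes $\lambda_1=1$ and produces \eqref{elliptic1}. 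The hard part will be the concentration-compactness step in (ii): proving the strict gap $\sup W>W_0(Q)$ and ruling out escape of the maximizing sequence to infinity are the delicate ingredients, since the breaking of translation invariance by $V$ requires the profile $\QQQ$ to localize near $\supp(V_{-})$.
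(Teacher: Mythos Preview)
The paper does not supply its own proof of Proposition~\ref{Pq}; it merely states the result and attributes the argument to the methods of Hong~\cite{hong} and Chen--Lu--Meng~\cite{clm}. Your proposal is precisely a reconstruction of that framework---the $W\le W_0$ comparison and translate test for~(i), and the strict-gap/concentration-compactness scheme for~(ii)---so in that sense you are doing exactly what the paper defers to.

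One technical slip in your part~(ii): you write that the maximizing sequence can be ``normalized by scale invariance so that $\|u_n\|_{L_x^2}=\|u_n\|_{\dot H_V^1}=1$''. Amplitude scaling $u\mapsto\mu u$ gives only one degree of freedom, and the spatial dilation $u\mapsto u(\lambda\,\cdot)$ that would supply the second is broken by the potential (it does not leave $\int V|u|^2\,dx$ invariant, hence does not preserve $W$). The standard remedy, as in~\cite{hong}, is to normalize only one quantity---say $\|u_n\|_{L_x^2}=1$---and then argue separately that $\|u_n\|_{\dot H_V^1}$ stays bounded above and away from zero along the maximizing sequence; without this, your weak-compactness and Rellich steps do not get off the ground. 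This is routine to repair but worth flagging, since the loss of dilation symmetry is exactly what distinguishes the potential case from the free one.

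A second point that needs a line of justification is the strict gap: the assertion that one can ``choose $x_0$'' with $\int V|Q(\cdot-x_0)|^2\,dx<0$ is not immediate from $V_-\neq0$ alone when $V$ also carries a large positive part, because the translate of $|Q|^2$ may always pick up more of $V_+$ than of $V_-$. In~\cite{hong} this is handled by an additional concentration argument (rescaling the test profile to localize on $\{V<0\}$); you should indicate that step rather than assume it.
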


We denote that 
$$
\MMM \EEE =\left\{\begin{aligned}
    &\frac{M(u)^{1-s_c}E_V(u)^{s_c}}{M(Q)^{1-s_c}E_0(Q)^{s_c}}, \quad & if~V_-=0,\\
    &\frac{M(u)^{1-s_c}E_V(u)^{s_c}}{M(\QQQ)^{1-s_c}E_V(\QQQ)^{s_c}}, \quad & if~V_- \neq 0.
\end{aligned}\right. 
$$    
In \cite{jl}, we have proved the scattering  theory and the blow up result of (NLH) with Kato potential below the threshold. The results are as follows:
\begin{theorem}[\cite{jl}]\label{jl}
For $ ( \gamma, d) = ( 3, 5) $ in \eqref{NLHv}, we assume $ V \ge 0 $ and satisfies \eqref{v0} and \eqref{v1}. 
Let $ u_0 \in H_x^1(\R^d)$ be radial and satisfy $\MMM \EEE <1$.
\begin{enumerate}[\rm (i)]
    \item If  the initial data  $u_0$ satisfies $\Vert  u_0 \Vert_{\dot{H}_V^1(\R^d)}||u_0||_{L_x^2(\R^d)}<\Vert Q \Vert_{\dot{H}_x^1(\R^d)}||Q||_{L_x^2(\R^d)},$
    then $\Vert  u\Vert_{\dot{H}_V^1(\R^d)}||u||_{L_x^2(\R^d)}<\Vert Q \Vert_{\dot{H}_x^1(\R^d)}||Q||_{L_x^2(\R^d)}.$
    Moreover, if $\ x \cdot \nabla V \leq 0 $ and $ x \cdot \nabla V \in L_x^{\frac52}(\R^d) $, then the global solution scatters in $H_x^1(\R^d)$  in both time directions.
    \item If  the initial data  $u_0$ satisfies $\Vert  u_0 \Vert_{\dot{H}_V^1(\R^d)}||u_0||_{L_x^2(\R^d)}>\Vert Q \Vert_{\dot{H}_x^1(\R^d)}||Q||_{L_x^2(\R^d)},$
    then $\Vert  u\Vert_{\dot{H}_V^1(\R^d)}||u||_{L_x^2(\R^d)}>\Vert Q \Vert_{\dot{H}_x^1(\R^d)}||Q||_{L_x^2(\R^d)}$ 
    during the maximal existence time.
    Moreover, if $\ x \cdot  \nabla V \in L_x^{\frac52}(\R^d)$, $2V+ x\cdot \nabla V \geq 0$, then either 
    $T^*<+\infty$ and 
    $$\lim_{t \to T^*}\Vert \nabla u(t)\Vert_{L_x^2(\R^d)}=\infty,$$ 
    or $T^*=+\infty$ and there exists a time sequence $t_n \to +\infty$ such that 
    $$\lim_{n \to +\infty } \Vert \nabla u(t_n)\Vert_{L_x^2(\R^d)}=\infty.$$
\end{enumerate}  
\end{theorem}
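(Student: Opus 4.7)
The plan is to follow the sharp sub-threshold program that has become standard for focusing dispersive equations, adapting it to the Kato-potential setting of \eqref{NLHv}. Throughout I use the norm equivalence $\Vert f\Vert_{\dot H^1_V}\sim\Vert f\Vert_{\dot H^1_x}$ guaranteed by $V\in\KKK_0\cap L^{d/2}_x$ and \eqref{v1}, and the dispersive/Strichartz estimates for $e^{-it\HHH}$ of Ionescu-Jerison and Beceanu-Goldberg. From Proposition~\ref{Pq}, a Pohozaev computation on the ground state $Q$ of \eqref{elliptic} (the case $V_{-}=0$) yields the sharp weighted Gagliardo-Nirenberg inequality
\[
P(u)\;\le\;C_{\mathrm{GN}}\,\Vert \nabla u\Vert_{L^2_x}^{\gamma}\Vert u\Vert_{L^2_x}^{4-\gamma}\;\le\;C_{\mathrm{GN}}\,\Vert u\Vert_{\dot H^1_V}^{\gamma}\Vert u\Vert_{L^2_x}^{4-\gamma},
\]
with $C_{\mathrm{GN}}$ realized at $Q$ and where the second inequality uses $V\ge0$. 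Combining with the energy identity and mass conservation, $\MMM\EEE<1$ forces the energy $E_V(u_0)$ to sit strictly below the local maximum of $y\mapsto\tfrac12 y-\tfrac14 C_{\mathrm{GN}}M(u_0)^{(4-\gamma)/2}y^{\gamma/2}$ in $y=\Vert u\Vert_{\dot H^1_V}^2$. A standard continuity-in-$t$ argument then traps $\Vert u(t)\Vert_{\dot H^1_V}\Vert u(t)\Vert_{L^2_x}$ on whichever side of $\Vert Q\Vert_{\dot H^1_x}\Vert Q\Vert_{L^2_x}$ the initial datum lies, yielding the static conclusions in both (i) and (ii).

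For the scattering half of (i), the trapping bound plus the Sobolev-norm equivalence furnishes a uniform $H^1_x$ bound on $u$, so global well-posedness is immediate from the $H^1$ Cauchy theory for $\HHH$. For scattering in $d=5$, $\gamma=3$ I would run the radial Dodson-Murphy virial-Morawetz scheme rather than Kenig-Merle. With $\ph_R$ a smooth radial cutoff equal to $|x|^2/2$ on $\{|x|\le R\}$, form
\[
Z_R(t)=2\,\Im\!\int \bar u\,\nabla\ph_R\!\cdot\!\nabla u\,dx,
\]
and show $Z_R'(t)$ dominates a coercive nonlinear quantity on $\{|x|\le R\}$ modulo errors. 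The potential contribution produces $-\int (x\!\cdot\!\nabla V)|u|^2\,dx$, which has the \emph{good} sign thanks to $x\!\cdot\!\nabla V\le 0$; the Hartree piece is coercive by the trapping from the first paragraph; and the radial Sobolev embedding together with $x\!\cdot\!\nabla V\in L^{5/2}_x$ controls the convolution tail outside $|x|\le R$. Integrating in time gives a Morawetz-type estimate that closes the scattering norm through the standard Strichartz bootstrap.

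For (ii), the super-threshold version of the trapping is propagated identically. When $|x|u_0\in L^2_x$ a direct computation yields
\[
\frac{d^2}{dt^2}\!\int|x|^2|u|^2\,dx=8\Vert u\Vert_{\dot H^1_V}^2-4\gamma P(u)-4\!\int(2V+x\!\cdot\!\nabla V)|u|^2\,dx.
\]
Using $P(u)=2\Vert u\Vert_{\dot H^1_V}^2-4E_V(u_0)$, the first two terms equal $-8(\gamma-1)\Vert u\Vert_{\dot H^1_V}^2+16\gamma E_V(u_0)$, which the super-threshold trapping pushes strictly below some $-\delta<0$ (with $\delta$ depending on the gap $1-\MMM\EEE$), while the last integral is nonpositive precisely because $2V+x\!\cdot\!\nabla V\ge 0$. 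Glassey's convexity argument then forces $T^*<\infty$ and $\Vert\nabla u(t)\Vert_{L^2_x}\to\infty$. When $|x|u_0\notin L^2_x$, I would replace $|x|^2$ with a radial cutoff in the spirit of Ogawa-Tsutsumi/Du-Wu-Zhang: the cutoff errors are controlled using $x\!\cdot\!\nabla V\in L^{5/2}_x$ and radial Sobolev embedding in $d=5$, and the same convexity produces the alternative growth of $\Vert\nabla u(t_n)\Vert_{L^2_x}$ along a sequence.

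\textbf{Main obstacle.} The central technical difficulty is aligning the sign assumptions on $V$ with the virial/Morawetz calculus: the hypotheses $x\!\cdot\!\nabla V\le 0$ in (i) and $2V+x\!\cdot\!\nabla V\ge 0$ in (ii) are engineered exactly to preserve the favorable signs in the identities above when $\HHH$ replaces $-\Delta$, while $x\!\cdot\!\nabla V\in L^{5/2}_x$ is the scale-critical bound needed to tame the cutoff and tail errors in the $d=5$ radial setting. A secondary but nontrivial point is importing the linear dispersive theory for $e^{-it\HHH}$ under our Kato hypotheses strongly enough to power both the $H^1$ local theory and the Dodson-Murphy scheme; this is where \eqref{v0}-\eqref{v1} combined with the Beceanu-Goldberg absence of eigenvalues and zero-energy resonance becomes essential.
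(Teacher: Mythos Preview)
The present paper does not prove this theorem: it is quoted from the authors' earlier work \cite{jl} as background for the super-threshold problem treated here, so there is no proof in this paper to compare your attempt against.

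That said, your outline is the standard sub-threshold program and is essentially correct, with one computational slip. In your virial identity you have the Hartree contribution as $-4\gamma P(u)$, but the correct coefficient is $-2\gamma$ (cf.\ Lemma~\ref{Lviral} in this paper):
\[
I''(t)=8\Vert u\Vert_{\dot H^1_V}^2-2\gamma P(u)-4\!\int(2V+x\cdot\nabla V)\,|u|^2\,dx.
\]
After substituting $P(u)=2\Vert u\Vert_{\dot H^1_V}^2-4E_V(u_0)$, the first two terms become $-4(\gamma-2)\Vert u\Vert_{\dot H^1_V}^2+8\gamma E_V(u_0)$ rather than your $-8(\gamma-1)\Vert u\Vert_{\dot H^1_V}^2+16\gamma E_V(u_0)$. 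Since $\gamma>2$, the sign structure is unchanged and Glassey's convexity argument still goes through, but the error would contaminate any quantitative estimates. The remaining ingredients you invoke --- the sharp Gagliardo--Nirenberg trapping (using $V\ge 0$ to pass from $\Vert\nabla u\Vert_{L^2_x}$ to $\Vert u\Vert_{\dot H^1_V}$), the radial Dodson--Murphy virial/Morawetz scheme with the sign hypothesis $x\cdot\nabla V\le 0$ absorbing the potential term, and the localized Du--Wu--Zhang virial for the infinite-variance blow-up alternative --- are the expected components of such a proof.
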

\begin{remark}
    In \cite{jl}, we dismiss the influence of $V_-$ for the assumption that $V\geq0$. 
    So the ground state $\QQQ$ in the result here satisfies the elliptic equation \eqref{elliptic}, which implies that $\QQQ$ is equivalent to $Q$ under the condition $V \geq 0$.  
    Thus we can rewrite the condition $\MMM\EEE<1$ as $E_V(u_0)M(u_0)<E_0(Q)M(Q)$ in Theorem \ref{jl}.
\end{remark}

Inspired by this result, we want to know what will happen when beyond the threshold. 
Here we define that
\begin{align*}
    I(t)=\int_{\R^d}|x|^2|u(t,x)|^2dx,
    \quad xu \in L_x^2(\R^d).
\end{align*}
The main theorem of this paper can be concluded as follows.
\begin{theorem}\label{Tconclusion}
Assume that the Kato potential V satisfies \eqref{v0} and \eqref{v1} in \eqref{NLHv} with $d \geq 3$.
Suppose $u_0 \in \Sigma:=\left\{f \in H_x^1(\R^d): xf \in L_x^2(\R^d)\right\}$, $I(0)< \infty $, $\MMM \EEE >1$	 and 
\begin{align}\label{me<1}
    \MMM \EEE \biggl( 1 - \frac{(I'(0))^2}{32E_V(u_0)I(0)} \biggr)\leq 1,
\end{align}
\begin{enumerate}[\rm (i)]
    \item {\rm (Blow up)} If 
    $I'(0)\leq 0$, $2V+x\cdot \nabla V \geq 0$, and
    \begin{align}\label{mpu>mpq}
        \bigl[M(u_0)\bigr]^{1-s_c}\bigl[P(u_0)\bigr]^{s_c} > \bigl[M(\QQQ)\bigr]^{1-s_c}\bigr[P(\QQQ)\bigl]^{s_c},
    \end{align} 
    then $u(t,x)$ blows up in finite time.
    \item {\rm (Global existence)} If 
    $I'(0) \geq 0$, $2V+x\cdot \nabla V \leq 0$, and
    \begin{align}\label{mpu<mpq}
        \bigl[M(u_0)\bigr]^{1-s_c}\bigl[P(u_0)\bigr]^{s_c} < \bigl[M(\QQQ)\bigr]^{1-s_c}\bigr[P(\QQQ)\bigl]^{s_c},
    \end{align}
    then $u(t,x)$ exists globally. 
    Moreover, 
    \begin{align}\label{sup}
        \lim_{t\to \infty} \sup \bigl[ M(u)\bigr]^{1-s_c}\bigl[P(u) \bigr]^{s_c} < \bigl[M(\QQQ) \bigr]^{1-s_c}\bigl[P(\QQQ) \bigr]^{s_c}.
    \end{align}
\end{enumerate}
\end{theorem}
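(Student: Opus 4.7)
The plan is to adapt the virial-based scheme of Duyckaerts--Roudenko \cite{dr} (implemented for the free Hartree equation by Yang--Li \cite{ylwc}) to the Kato-potential setting. The centerpiece is the variance $I(t)=\int_{\R^d}|x|^2|u|^2\,dx$, whose derivatives satisfy the virial identity
\begin{align*}
I'(t) &= 4\,\Im\int_{\R^d}\bar u\,x\cdot\nabla u\,dx,\\
I''(t) &= 8\|u(t)\|_{\dot H^1_V(\R^d)}^{2} - 2\gamma\,P(u(t)) - 4\int_{\R^d}(2V+x\cdot\nabla V)|u|^2\,dx.
\end{align*}
Using energy conservation this rewrites as $I''(t)=16\,E_V(u_0)-2(\gamma-2)\,P(u(t))-4\int_{\R^d}(2V+x\cdot\nabla V)|u|^2\,dx$, so the sign hypotheses on $2V+x\cdot\nabla V$ make the potential contribution non-positive in case (i) and non-negative in case (ii).

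The second ingredient is the sharp Hartree--Gagliardo--Nirenberg inequality in the $V$-weighted setting, $P(u)\le C_*\,\|u\|_{\dot H^1_V}^{\gamma}\|u\|_{L^2}^{4-\gamma}$ with optimizer $\QQQ$ from Proposition~\ref{Pq}, together with the Pohozaev relations $\|\QQQ\|_{\dot H^1_V}^2=\tfrac{\gamma}{4}P(\QQQ)$ and $E_V(\QQQ)=\tfrac{\gamma-2}{8}P(\QQQ)$. Substituted into $E_V=\tfrac12\|u\|_{\dot H^1_V}^2-\tfrac14 P(u)$, these recast $M^{1-s_c}E_V^{s_c}$ as an explicit concave scalar function of the scale-invariant quantity $M^{(1-s_c)/s_c}\|u\|_{\dot H^1_V}^2$, whose unique maximum equals $M(\QQQ)^{1-s_c}E_V(\QQQ)^{s_c}$. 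For the super-threshold regime, Cauchy--Schwarz produces $(I'(t))^2\le 16\,I(t)\,\|\nabla u\|_{L^2}^2$; combined with the Kato-norm equivalence $\|\nabla u\|_{L^2}\sim\|u\|_{\dot H^1_V}$ (see the remark after \eqref{v1}), it reinterprets the hypothesis \eqref{me<1} as a sub-threshold bound on the ``reduced'' mass-energy obtained by subtracting $(I'(t))^2/(32\,I(t))$ from $E_V(u_0)$. A continuity/bootstrap argument then propagates this bound for all $t$ in the maximal lifespan, and combined with \eqref{mpu>mpq} (resp.\ \eqref{mpu<mpq}) it forces $M(u_0)^{1-s_c}P(u(t))^{s_c}$ to remain strictly above (resp.\ strictly below) $M(\QQQ)^{1-s_c}P(\QQQ)^{s_c}$ throughout the maximal existence interval.

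Given this trapping, the conclusions follow from the virial identity. For (i), the strict lower bound on $P(u(t))$ together with $2V+x\cdot\nabla V\ge0$ yields $I''(t)\le-\delta<0$ uniformly in $t$; combined with $I'(0)\le 0$ this forces $I(t)$ to become negative in finite time, contradicting $I\ge 0$, so the solution must blow up at some $T^*<\infty$. For (ii), the strict upper bound on $P(u(t))$ together with $2V+x\cdot\nabla V\le 0$ yields a uniform $\|u(t)\|_{\dot H^1_V}$-bound via energy conservation, hence global existence; the strict $\limsup$ \eqref{sup} then emerges as a quantitative refinement of the same bootstrap gap.

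The main obstacle is the trapping propagation under \eqref{me<1}: although $M$ and $E_V$ are conserved, the correction $(I'(t))^2/(32\,I(t))$ is not, so one must show the inequality \eqref{me<1} actually persists in $t$ by bootstrapping against the time derivative of the concentration moment and ruling out a crossing of $M(\QQQ)^{1-s_c}P(\QQQ)^{s_c}$ by $M(u_0)^{1-s_c}P(u(t))^{s_c}$. The Kato-norm equivalence introduces a multiplicative slack when $V_-\ne 0$, which has to be absorbed by careful signed bookkeeping in the virial identity and in the $\QQQ$-variational characterization.
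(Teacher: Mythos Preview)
Your scheme has a genuine gap in the blow-up part, and the same issue contaminates the bootstrap in part (ii).

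You claim that, once trapping gives $[M(u_0)]^{1-s_c}[P(u(t))]^{s_c}>[M(\QQQ)]^{1-s_c}[P(\QQQ)]^{s_c}$, one gets $I''(t)\le-\delta<0$ uniformly. That implication is correct \emph{below} threshold but fails here. Indeed, by energy conservation,
\[
I''(t)\le 16E_V(u_0)-2(\gamma-2)P(u(t)),
\]
and the trapping only ensures $P(u(t))>\bigl(M(\QQQ)/M(u_0)\bigr)^{(1-s_c)/s_c}P(\QQQ)=\tfrac{8}{\gamma-2}\bigl(M(\QQQ)/M(u_0)\bigr)^{(1-s_c)/s_c}E_V(\QQQ)$. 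Since by hypothesis $E_V(u_0)>\bigl(M(\QQQ)/M(u_0)\bigr)^{(1-s_c)/s_c}E_V(\QQQ)$, the right-hand side above need not be negative; $I''$ can be positive. So Glassey's convexity argument does not apply directly.

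The paper (following \cite{dr}) resolves this by working with $z(t)=\sqrt{I(t)}$ rather than $I(t)$, and showing $z''(t)<0$. The key input is not plain Cauchy--Schwarz but its sharp refinement obtained by substituting $e^{i\lambda|x|^2}u$ into the Gagliardo--Nirenberg inequality with optimal constant $C_\QQQ$ and optimizing over $\lambda$:
\[
\Bigl(\Im\int x\bar u\,\nabla u\,dx\Bigr)^2\le I(t)\Bigl(\|u\|_{\dot H^1_V}^2-\tfrac{[P(u)]^{2/\gamma}}{C_\QQQ[M(u)]^{(4-\gamma)/\gamma}}\Bigr).
\]
Since $|e^{i\lambda|x|^2}u|=|u|$, this works directly in the $\dot H^1_V$-norm with no Kato-equivalence slack. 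Rewriting via \eqref{pE_V}--\eqref{uE_V} gives $[z'(t)]^2\le 4f(\widetilde{I''(t)})$ for an explicit scalar function $f$ whose minimum value is $x_0/8$ at a point $x_0$ characterized by \eqref{meps1}; hypothesis \eqref{me<1} becomes $[z'(0)]^2\ge x_0/2=4f(x_0)$. The blow-up argument is then an ODE-style continuity argument on $z$: one shows $z''(t)<0$ throughout, hence $z'(t)\le z'(0)\le0$ and $z\to0$ in finite time. The global-existence argument is similar: one propagates $z'(t)\ge 2\sqrt{f(x_0)}+\varepsilon$, which forces $\widetilde{I''(t)}>x_0$ and hence \eqref{sup}.

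In short: replace ``Cauchy--Schwarz $+$ norm equivalence'' by the $e^{i\lambda|x|^2}$-optimized Gagliardo--Nirenberg bound, replace the claim $I''\le-\delta$ by an analysis of $z=\sqrt{I}$, and run the continuity argument on $z'$ rather than on the (non-conserved) quantity $\MMM\EEE\bigl(1-(I')^2/(32E_VI)\bigr)$.
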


\textbf{The sketch of blow up:} 
Based on  the classical argument from Glassey in \cite{glassey},  if the initial data  $u_0\in \Sigma$, then the solution will blow up in finite time.  
Then we  simplify the problem to prove  that for $xu \in L_x^2(\R^d)$,
\begin{align*}
    \frac{d^2}{dt^2}I(t)= \frac{d^2}{dt^2}\Vert xu \Vert_{L_x^2(\R^d)}^2 < 0, \quad \forall ~ t \in \left[ 0,T_+(u) \right).
\end{align*}
We make use of $z(t)=\sqrt{I(t)}$, and prove the claim that $z''(t) < 0$ in finite time (see details in Section 3.1). 
So we derive that $u$ will blow up in finite time.

\textbf{The sketch of  global existence:} 
By the contradiction argument, we prove the lower bound of $z'(t)$ (see details in Section 3.2). 
Then we use the claim to prove the boundedness \eqref{sup}. 
Combing it with the mass and energy conservation, we eventually obtain the boundedness of  $\Vert u \Vert_{\dot{H}_V^1(\R^d)}$, which implies that the solution $u$ exists globally. 


\begin{remark}
    The condition $I(0)<\infty$ ensures the existence of $I'(0)$ and $I''(0)$ of virial identity, which play important role of proving our main results. 
    The condition $\MMM \EEE >1$ implies that our paper  is based on \eqref{NLHv} beyond the threshold.
    The condition \eqref{me<1} provides an unified estimate of $z'(0)$, which facilitates the proof of blow up and global existence.
    The condition $I'(0)\leq 0$ is instrumental in the proof of blow up.
    The condition $I'(0)\geq 0$ is used to prove the claim, which is essential for demonstrating global existence.
    The various forms of the expression $2V+x\cdot \nabla V$ are used to estimate the potential term in $I''(t)$ in different conditions. 
    
\end{remark}
\textbf{Outline of this paper:} 
In Section 2, we introduce the local well-posedness, the positivity of the operator $\HHH$,  variational characterization and the virial identity.  In Section 3, with several claims and analysis, we prove our main results, including blow up and global existence. 

\section{Preliminaries}
In this section, we introduce the notation and several fundamental lemmas needed in this paper.
The notation $A\lesssim B$ means that
${A}\leqslant{CB}$ for some constant $C>0$.
Likewise,  if $A\lesssim B \lesssim A$, we say that $A \sim B$. 
We use $L_x^{r}(\mathbb{R}^d)$ to denote the Lebesgue space of functions $f:\mathbb{R}^{d}\rightarrow{\mathbb{C}}$ whose norm
$$\|f\|_{L_x^{r}}:=\Big(\int_{\mathbb{R}^d}|f(x)|^{r}dx\Big)^{\frac{1}{r}}$$
is finite, with the usual modifications when $r=\infty$.

\subsection{Local well-posedness}
In \cite{jl}, we have introduced the Strichartz estimates of 
\eqref{NLHv} and proved the local well-posedness by Banach contraction mapping principle. 
Here we only demonstrate the local well-posedness as follows.
\begin{lemma}[Local well-posedness, \cite{jl}]
    Let \ $V: \R^d \rightarrow \R$ satisfy  \eqref{v0} and \eqref{v1}. Then the equation \eqref{NLHv} is locally well-posed in $H_x^1(\R^d)$ for $(\gamma, d)=(3,5)$.
\end{lemma}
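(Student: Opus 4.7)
The plan is to run a standard Banach fixed-point argument on the Duhamel formulation
\[
u(t) \;=\; e^{-it\HHH}u_0 \;+\; i\int_0^t e^{-i(t-s)\HHH}\bigl[(|\cdot|^{-3}\ast |u|^2)u\bigr](s)\,ds,
\]
treating $\HHH=-\Delta+V$ as a ``free'' propagator. The first ingredient is a Strichartz toolbox for $e^{-it\HHH}$. Because $V$ satisfies \eqref{v0}--\eqref{v1}, the norm equivalence noted in the remark makes $\HHH$ positive and the results of Beceanu--Goldberg and Ionescu--Jerison apply, yielding the full dispersive estimate for $e^{-it\HHH}$, hence the standard family of Strichartz estimates at $L^2$ and (via the equivalence $\|\HHH^{1/2}f\|_{L^2_x}\sim \|\nabla f\|_{L^2_x}$ and $\|f\|_{\dot H^1_V}\sim \|f\|_{\dot H^1_x}$) at $\dot H^1_x$. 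This lets me carry out the contraction in purely flat Sobolev norms, ignoring the potential in all nonlinear estimates.

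Next, I fix an $H^1$-admissible Strichartz pair $(q,r)$ suited to $(\gamma,d)=(3,5)$, choose $T>0$, and set
\[
X_T := \bigl\{u\in C([0,T];H^1_x)\cap L^q_tL^r_x([0,T]\times\R^5)\,:\, \|u\|_{X_T}\le 2C\|u_0\|_{H^1_x}\bigr\},
\]
with the natural metric. Define $\Phi(u)$ by the right-hand side of the Duhamel formula above. Applying the inhomogeneous Strichartz estimate for $e^{-it\HHH}$, the proof reduces to bounding
\[
\bigl\|(|\cdot|^{-3}\ast |u|^2)u\bigr\|_{N(T)} \quad\text{and}\quad \bigl\|\nabla\bigl[(|\cdot|^{-3}\ast |u|^2)u\bigr]\bigr\|_{N(T)}
\]
in an appropriate dual Strichartz space $N(T)$. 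For this I would use the Hardy--Littlewood--Sobolev inequality to handle $|\cdot|^{-3}\ast |u|^2$, then Hölder in space and time, and finally Sobolev embedding into the Strichartz norms in $X_T$. The derivative term expands via the product rule into $(|\cdot|^{-3}\ast|u|^2)\nabla u$ and $(|\cdot|^{-3}\ast\Re(\bar u\nabla u))u$, both of which close by the same chain of inequalities. This yields an estimate of the schematic form
\[
\|\Phi(u)\|_{X_T}\;\le\; C\|u_0\|_{H^1_x}+ C\,T^{\theta}\|u\|_{X_T}^3
\]
for some $\theta>0$, with the analogous difference estimate
\[
\|\Phi(u)-\Phi(v)\|_{X_T}\;\le\; C\,T^\theta\bigl(\|u\|_{X_T}^2+\|v\|_{X_T}^2\bigr)\,\|u-v\|_{X_T}.
\]

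Choosing $T=T(\|u_0\|_{H^1_x})$ small enough makes $\Phi$ a contraction on $X_T$, producing a unique fixed point $u\in C([0,T];H^1_x)$. Persistence of regularity, continuous dependence on the initial data, and the blow-up alternative then follow by the usual bootstrapping from the Strichartz and nonlinear estimates just established, with standard iteration extending the solution to a maximal interval $(-T_-(u),T_+(u))$.

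The main obstacle is not the fixed-point scheme itself but the nonlinear bookkeeping: one must pick Strichartz exponents that simultaneously (i) respect the $H^1$-admissibility at $d=5$, (ii) let HLS convert $|x|^{-3}\ast|u|^2$ into an $L^p_x$-bound using exponents compatible with Sobolev embedding from $H^1_x(\R^5)$, and (iii) leave a positive power of $T$ as a smallness factor. Since the problem is energy-subcritical at $(\gamma,d)=(3,5)$ (with $s_c=1/2<1$), such a choice exists; this is precisely the content worked out in \cite{jl}, and the lemma follows.
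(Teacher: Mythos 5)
Your proposal matches the paper's approach: the paper does not reprove this lemma but cites \cite{jl}, stating explicitly that the local well-posedness was established there via Strichartz estimates for $e^{-it\HHH}$ (valid under \eqref{v0}--\eqref{v1} by the positivity of $\HHH$ and the Beceanu--Goldberg and Ionescu--Jerison results) combined with the Banach contraction mapping principle on the Duhamel formulation. Your outline is exactly this argument, with the standard Hardy--Littlewood--Sobolev and H\"older bookkeeping for the Hartree nonlinearity, so it is consistent with the paper's (deferred) proof.
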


With the depiction of local well-posedness, we can further discuss the global existence and blow up of \eqref{NLHv}. 
Indeed, if we want to discuss the local well-posedness when $2<\gamma<4$ rather than $\gamma=3$ in \cite{jl}, we only need to make some minor alterations to the proof. Then we can also obtain the similar result. 
Here we omit the proof.

\subsection{The positivity of $\HHH$}\label{A1}
In this subsection, we discuss the Kato potential $V$ and the operator $\HHH$. 
Indeed, the operator $\HHH$ is positive definite when the negative part of $V$ is sufficiently small.
We summarize this as the lemma below.
\begin{lemma}
    For $C_d=\Gamma(\frac d2)\big/\bigl[(2-d)2\pi^{\frac d2 }\bigr]$, 
    if $V\in \KKK$, then
    \begin{align*}
        \int_{\R^d} V |u|^2 dx \leq C_d \Vert V \Vert_{\KKK} \Vert \nabla u \Vert_{L_x^2(\R^d)}^2.
    \end{align*}
    In particular, if $ \Vert V_{-} \Vert_{\KKK} < \frac{1}{C_d}$, we have 
    \begin{align*}
        (1- C_d\Vert V_{-} \Vert_{\KKK})\Vert \nabla u \Vert_{L_x^2(\R^d)}^2 
        \leq \Vert \HHH^{\frac12} u \Vert_{L_x^2(\R^d)}^2 
        = \int_{\R^d} \HHH u \bar{u} dx
        \leq  (1 + C_d\Vert V \Vert_{\KKK})\Vert \nabla u \Vert_{L_x^2(\R^d)}^2. 
    \end{align*}
\end{lemma}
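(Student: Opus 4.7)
The plan is to first establish the Kato-type inequality $\int V|u|^2\,dx \leq C_d\|V\|_{\KKK}\|\nabla u\|_{L_x^2}^2$ by an operator-norm argument, and then deduce the two-sided bound on $\|\HHH^{1/2}u\|_{L_x^2}^2$ algebraically from the identity $\|\HHH^{1/2}u\|_{L_x^2}^2 = \|\nabla u\|_{L_x^2}^2 + \int V|u|^2\,dx$. Writing $V = V_+ - |V_-|$ and noting that $\||V_-|\|_{\KKK} = \|V_-\|_{\KKK}$ (since the Newton kernel is positive), it suffices to prove the main inequality in the case $V \geq 0$.

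For $V \geq 0$, I would rewrite $\int V|u|^2\,dx = \|V^{1/2}u\|_{L_x^2}^2$. Setting $f = (-\Delta)^{1/2}u$, so that $\|f\|_{L_x^2} = \|\nabla u\|_{L_x^2}$, this becomes $\|V^{1/2}(-\Delta)^{-1/2}f\|_{L_x^2}^2$, and the claim reduces to the operator bound
\[
\|V^{1/2}(-\Delta)^{-1/2}\|_{L_x^2\to L_x^2}^2 \;=\; \|V^{1/2}(-\Delta)^{-1}V^{1/2}\|_{L_x^2\to L_x^2} \;\leq\; C_d\|V\|_{\KKK}.
\]
The right-hand operator is integral with nonnegative kernel $K(x,y) = V^{1/2}(x)G(x-y)V^{1/2}(y)$, where $G$ is the (positive) Newton kernel of $-\Delta$ on $\R^d$, $d\geq 3$. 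I would apply Schur's test with the weight $w(x)=V^{1/2}(x)$; the pivotal calculation is
\[
\int K(x,y)\,w(y)\,dy \;=\; V^{1/2}(x)\int G(x-y)V(y)\,dy \;=\; V^{1/2}(x)\bigl[(-\Delta)^{-1}V\bigr](x) \;\leq\; \|V\|_{\KKK}\,w(x),
\]
with the symmetric estimate in $x$ following from $G(x-y)=G(y-x)$. This yields the required operator bound, the constant $C_d$ emerging from the normalization of the Newton kernel.

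For the positivity of $\HHH$, I would apply the main inequality to $V$ itself to obtain the upper bound $\int V|u|^2\,dx \leq C_d\|V\|_{\KKK}\|\nabla u\|_{L_x^2}^2$. For the lower bound, split $V = V_+ + V_-$, discard the nonnegative term $\int V_+|u|^2\,dx \geq 0$, and apply the main inequality to $|V_-|=-V_-\geq 0$ to obtain $\int V|u|^2\,dx \geq -C_d\|V_-\|_{\KKK}\|\nabla u\|_{L_x^2}^2$. Substituting both bounds into $\|\HHH^{1/2}u\|_{L_x^2}^2 = \|\nabla u\|_{L_x^2}^2 + \int V|u|^2\,dx$ gives the stated two-sided estimate.

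\textbf{Main obstacle.} The only substantive step is the Schur-test calculation; the key insight is that choosing $w=V^{1/2}$ collapses the Schur condition exactly to the definition of $\|V\|_{\KKK}$ via $(G*V)(x)=[(-\Delta)^{-1}V](x)$. A mild technicality—that $V^{1/2}$ may vanish on a set of positive measure, so that $w$ is not strictly positive as required by some formulations of Schur—is handled by the standard regularization $V\mapsto V+\varepsilon\chi_{B_R}$, proving the bound with the regularized potential and then letting $\varepsilon\to 0^+$ followed by $R\to\infty$.
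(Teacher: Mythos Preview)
Your proposal is correct and follows essentially the same route as the paper: both reduce to the operator bound on $|V|^{1/2}(-\Delta)^{-1}|V|^{1/2}$ via the $TT^*$ identity with $T=|V|^{1/2}(-\Delta)^{-1/2}$, and both establish that bound by a Schur-type argument with weight $|V|^{1/2}$ (the paper writes out the underlying Cauchy--Schwarz--Fubini computation by hand rather than invoking Schur's test by name). The two-sided bound on $\|\HHH^{1/2}u\|_{L_x^2}^2$ is then deduced in the same algebraic way.
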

\begin{proof}
    We first consider the fundamental solution $K(x)$ of the Laplace equation $-\Delta u = 0$, that is $\Delta K(x)=\delta(x)$.
    Then for the equation  $-\Delta u =f$, we have 
    \begin{align*}
        -\Delta u=-\Delta (-\Delta)^{-1}f= -\Delta (-K \ast f ) = -\Delta (-K) \ast f=f.
    \end{align*}
    For $d\geq 3$, the fundamental solution of the Laplace equation is 
    $K(x)=C_d|x|^{2-d}$ ,
    where
    \begin{align*}
        C_d=\frac{\Gamma (\frac d2)}{(d-2)2\pi^{\frac d2}}.
    \end{align*}
    Thus we find the relation $(-\Delta)^{-1}f= -K \ast f=\int_{\R^d}C_d\frac{f(y)}{|x-y|^{d-2}}dy$.
    Know that
    \begin{align*}
        \bigl\Vert |V|^{\frac 12}(-\Delta)^{-1}|V|^{\frac 12}u\bigr\Vert_{L_x^2(\R^d)}^2
        &=\int_{\R^d}|V(x)| \biggl| C_d\frac{|V(y)|^{\frac 12}}{|x-y|^{d-2}} u(y)dy\biggr|^2dx\\
        &\leq \int_{\R^d}|V(x)| \biggl( C_d\int_{\R^d}\frac{|V(y)|}{|x-y|^{d-2}} dy\biggr) \biggl( C_d\int_{\R^d}\frac{|u(y)|^2}{|x-y|^{d-2}} dy\biggr) dx\\
        &\leq C_d \Vert V \Vert_{\KKK} \int_{\R^d}\frac{C_d|V(x)|}{|x-y|^{d-2}}|u(y)|^2dydx\\
        &\leq \bigl(C_d \Vert V \Vert_{\KKK}\bigr)^2\Vert u \Vert_{L_x^2(\R^d)}^2.
    \end{align*}
    From the $TT^{*}$ argument \cite{gv}, we set $T=|V|^{\frac 12}|\nabla|^{-1}$.
    Then we find 
    \begin{align}\label{vvvu}
        \int_{\R^d} V |u|^2 dx=\bigl\Vert |V|^{\frac 12}|\nabla|^{-1} \nabla u \bigr\Vert_{L_x^2(\R^d)}^2
        \leq C_d \Vert V \Vert_{\KKK} \Vert \nabla u \Vert_{L_x^2(\R^d)}^2.
    \end{align}
    In particular, if $\Vert V_{-} \Vert_{\KKK} <\frac{1}{C_d}$, combining with \eqref{vvvu}, we have
    \begin{align*}
        (1- C_d\Vert V_{-} \Vert_{\KKK})\Vert \nabla u \Vert_{L_x^2(\R^d)}^2 
        \leq \Vert \nabla u \Vert_{L_x^2(\R^d)}^2 + \Vert V^{\frac 12} u \Vert_{L_x^2(\R^d)}^2 
        \leq (1 + C_d\Vert V \Vert_{\KKK})\Vert \nabla u \Vert_{L_x^2(\R^d)}^2. 
    \end{align*}
    Then the proof is completed.
\end{proof}

\subsection{Variational analysis}
In view of the proof in \cite{hong}, we can compute the sharp constant $C_{GN}$ for Gagliardo-Nirenberg inequality, which is crucially used to our later proof.
It can be estimated as follows.
\begin{equation}\label{p<c}
    P(u) \leq C_{GN}\Vert u \Vert_{\dot{H}_V^1(\R^d)}^{\gamma} \Vert u \Vert_{L_x^2(\R^d)}^{4-\gamma},
\end{equation}
where 
\begin{align*}
    C_{GN}=\sup_{u\in H_x^1(\R^d) \backslash \{0\}}W(u)
    =\sup_{u\in H_x^1(\R^d) \backslash \{0\}} \frac{P(u)}{\Vert u \Vert_{\dot{H}_V^1(\R^d)}^{\gamma} \Vert u \Vert_{L_x^2(\R^d)}^{4-\gamma}}.
\end{align*}

We know that $\QQQ$ is a strong solution in \cite{hong}.
Rewrite \eqref{p<c} as 
\begin{align*}
    \bigl[P(u)\bigr]^{\frac{2}{\gamma}}
    \leq C_{\QQQ}\Vert u \Vert_{\dot{H}_V^1(\R^d)}^2 \bigl(\Vert u \Vert_{L_x^2(\R^d)}^2\bigr)^{\frac{4-\gamma}{\gamma}},
\end{align*}
where 
\begin{align*}
    C_{\QQQ}=(C_{GN})^{\frac{2}{\gamma}}=\frac{\bigl[P(\QQQ) \bigr]^{\frac{2}{\gamma}}}{\Vert \QQQ \Vert_{\dot{H}_V^1(\R^d)}^{2} \Vert \QQQ \Vert_{L_x^2(\R^d)}^{\frac{2(4-\gamma)}{\gamma}}}=\frac{4^{\frac{2}{\gamma}}}{(4-\gamma)^{\frac{2-\gamma}{\gamma}}\gamma \Vert \QQQ \Vert_{L_x^2(\R^d)}^{\frac{4}{\gamma}}} .
\end{align*}

Since our paper focus on the long-time dynamical behavior of \eqref{NLHv} beyond the mass-energy threshold, the ground state $\QQQ$ is vital to the proof.
We show the properties of $\QQQ$ as follows.
\begin{proposition}[Pohozhaev identities]\label{Prelation}
    For  $\QQQ$ in the \eqref{elliptic1}, we have 
    \begin{align*}
        \Vert \QQQ \Vert_{\dot{H}_V^1(\R^d)}^2=\frac{\gamma w_{\QQQ}^2}{4-\gamma}\Vert \QQQ \Vert_{L_x^2(\R^d)}^2, 
        \quad P(\QQQ)=\frac{4w_{\QQQ}^2}{4-\gamma}\Vert \QQQ \Vert_{L_x^2(\R^d)}^2,
    \end{align*}
    and then 
    \begin{align}\label{pe}
         P(\QQQ)=\frac{8}{\gamma-2}E_V(\QQQ).
    \end{align}   
\end{proposition}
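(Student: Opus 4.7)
The plan is to extract all three identities from a single Nehari-type pairing of the elliptic equation \eqref{elliptic1} with $\overline{\QQQ}$, together with the explicit definition of $w_{\QQQ}^2$ that is built into \eqref{elliptic1}. Since the first identity is just this definition rearranged, only one integration is actually needed, and the third identity is a one-line consequence of the first two.

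\textbf{Step 1 (Nehari pairing).} I would multiply \eqref{elliptic1} by $\overline{\QQQ}$ and integrate over $\R^d$. Every term is absolutely integrable under the running assumptions: $\int V|\QQQ|^2\,dx$ is finite by H\"older together with $V\in L_x^{d/2}(\R^d)$ and the Sobolev embedding $H_x^1(\R^d)\hookrightarrow L_x^{2d/(d-2)}(\R^d)$, and $P(\QQQ)$ is finite by Hardy--Littlewood--Sobolev under $2<\gamma<\min\{4,d\}$. This yields the Nehari-type identity
\[
    \Vert\QQQ\Vert_{\dot{H}_V^1(\R^d)}^2 + w_{\QQQ}^2\Vert\QQQ\Vert_{L_x^2(\R^d)}^2 = P(\QQQ).
\]

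\textbf{Step 2 (First two identities).} The first identity is immediate by rearranging the definition $w_{\QQQ}^2=(4-\gamma)\Vert\QQQ\Vert_{\dot{H}_V^1(\R^d)}^2\big/\bigl(\gamma\Vert\QQQ\Vert_{L_x^2(\R^d)}^2\bigr)$ supplied in \eqref{elliptic1}. Substituting this into the identity from Step 1 gives
\[
    P(\QQQ) = \Bigl(\tfrac{\gamma}{4-\gamma}+1\Bigr) w_{\QQQ}^2\Vert\QQQ\Vert_{L_x^2(\R^d)}^2 = \tfrac{4w_{\QQQ}^2}{4-\gamma}\Vert\QQQ\Vert_{L_x^2(\R^d)}^2,
\]
which is the second identity in the proposition.

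\textbf{Step 3 (Identity \eqref{pe}).} Plugging the expressions just obtained into the energy formula $E_V(\QQQ)=\tfrac12\Vert\QQQ\Vert_{\dot{H}_V^1(\R^d)}^2-\tfrac14 P(\QQQ)$ produces
\[
    E_V(\QQQ) \;=\; \tfrac{\gamma-2}{2(4-\gamma)}\,w_{\QQQ}^2\Vert\QQQ\Vert_{L_x^2(\R^d)}^2,
\]
so the ratio $P(\QQQ)/E_V(\QQQ)=8/(\gamma-2)$ follows at once, which is \eqref{pe}.

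There is no genuine obstacle here; the only real decision is to pair with $\overline{\QQQ}$ rather than to run a scaling (true Pohozhaev) argument against $x\cdot\nabla\QQQ$. The latter would have introduced an extraneous $\int (x\cdot\nabla V)|\QQQ|^2\,dx$ term, which is absent from the statement, whereas the Nehari pairing interacts cleanly with the potential $V$ and closes the system via the defining formula of $w_{\QQQ}^2$.
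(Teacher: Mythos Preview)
Your proof is correct and, in fact, more economical than the paper's. You observe that the first identity is nothing but the definition of $w_{\QQQ}^2$ rearranged, so a single Nehari pairing $\langle \eqref{elliptic1},\overline{\QQQ}\rangle$ already closes the system. The paper instead runs the full Pohozhaev machinery: it pairs the equation both with $Q_w$ and with $x\cdot\nabla Q_w$ for a general frequency $w$, obtaining
\[
\Vert Q_w\Vert_{\dot H_V^1}^2=\tfrac{\gamma}{4-\gamma}w^2\Vert Q_w\Vert_{L^2}^2+\tfrac{2}{4-\gamma}\!\int(2V+x\cdot\nabla V)|Q_w|^2,\qquad
P(Q_w)=\tfrac{4}{4-\gamma}w^2\Vert Q_w\Vert_{L^2}^2+\tfrac{2}{4-\gamma}\!\int(2V+x\cdot\nabla V)|Q_w|^2,
\]
and only then specializes to $w=w_{\QQQ}$, at which point the defining formula of $w_{\QQQ}^2$ forces $\int(2V+x\cdot\nabla V)|\QQQ|^2\,dx=0$ and the stated identities drop out. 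So the ``extraneous'' term you deliberately avoided does appear in the paper's route and is shown to vanish as a byproduct. Your approach is cleaner for the proposition as stated; the paper's approach costs an extra integration by parts but delivers the additional (unused, but structurally interesting) relation $\int(2V+x\cdot\nabla V)|\QQQ|^2\,dx=0$.
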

\begin{proof}
    Let $Q_w$ be a strong solution to the equation
    \begin{equation}\label{Q1}
        (\Delta-V)Q_w - w^2 Q_w + (|\cdot|^{-\gamma} \ast |Q_w|^2)Q_w=0,
    \end{equation}
    (\romannumeral1) Multiplying \eqref{Q1} by $Q_w$ and integrating by parts, we have 
    $$\int_{\R^d}(\Delta-V)  Q_w  \cdot Q_w dx- w^2 \int_{\R^d}Q_w^2dx + \int_{\R^d}(|\cdot|^{-\gamma} \ast |Q_w|^2)Q_w^2dx=0.$$
    We find
    \begin{align*}
        \int_{\R^d}(\Delta-V)  Q_w  \cdot Q_w dx
        =-\int_{\R^d}|\nabla Q_w|^2dx- \int_{\R^d}VQ_w^2dx
        =-||Q_w||_{\dot{H}_V^1(\R^d)}^2,
    \end{align*}
    Then we obtain
    \begin{equation}\label{multiply1}
        P(Q_w)
        = ||Q_w||_{\dot{H}_V^1(\R^d)}^2 + w^2 ||Q_w||_{L_x^2(\R^d)}^2
    \end{equation}
    \\
    (\romannumeral2) Multiplying \eqref{Q1} by $x \cdot \nabla Q_w $ and integrating by parts, we have 
    $$\int_{\R^d}(\Delta-V) Q_w \cdot x \cdot \nabla Q_w dx-\int_{\R^d}Q_w \cdot x \cdot \nabla Q_w dx+\int_{\R^d}(|\cdot|^{-\gamma} \ast |Q_w|^2)Q_w\cdot x \cdot \nabla Q_w dx =0.$$
    For each part, we have
    \begin{align*}
    &(a)~\int_{\R^d}(\Delta-V) Q_w \cdot x \cdot \nabla Q_w dx
    =\frac {d-2}{2} ||\nabla Q_w||_{L_x^2(\R^d)}^2 +\frac12 \int_{\R^d} ( x \cdot \nabla V +2 V)|Q_w|^2dx,\\
    &(b)~\int_{\R^d}w^2 Q_w \cdot x \cdot \nabla Q_w dx = -\frac {dw^2}{2}||Q_w||_{L_x^2(\R^d)}^2,\\
    &(c)~\int_{\R^d}(|\cdot|^{-\gamma} \ast |Q_w|^2)Q_w\cdot x \cdot \nabla Q_w dx
    =(\frac{\gamma}{4}-\frac d2)P(Q_w).
    \end{align*}
    Collecting them all, we have 
    \begin{align}\label{Q2}
        (d-\frac{\gamma}{2})P(Q_w)=
        (d-2)||Q_w||_{\dot{H}_V^1(\R^d)}^2+dw^2||Q_w||_{L_x^2(\R^d)}^2+\int_{\R^d}(2V + x \cdot \nabla V)|Q_w|^2dx.
    \end{align}
    Combining \eqref{Q1} and \eqref{Q2} together, we find 
    \begin{equation}\label{pohozhaev0}
        \left\{
        \begin{aligned}
        &||Q_w||_{\dot{H}_V^1(\R^d)}^2 = \frac{\gamma}{4-\gamma}w^2||Q_w||_{L_x^2(\R^d)}^2 + \frac{2}{4-\gamma} \int_{\R^d}(2V + x \cdot \nabla V)|Q_w|^2dx,\\
        &P(Q_w) = \frac{4}{4-\gamma}w^2||Q_w||_{L_x^2(\R^d)}^2 + \frac{2}{4-\gamma} \int_{\R^d}(2V + x \cdot \nabla V)|Q_w|^2dx.
        \end{aligned}
        \right.
    \end{equation}
    Indeed, if $V=0$, then 
    \begin{equation}
        \left\{
        \begin{aligned}
        &||\nabla Q_w||_{L_x^2(\R^d)}^2 = \frac{\gamma}{4-\gamma}w^2||Q_w||_{L_x^2(\R^d)}^2,\\
        &P(Q_w) = \frac{4}{4-\gamma}w^2||Q_w||_{L_x^2(\R^d)}^2.
        \end{aligned}
        \right.
    \end{equation}
    Let $\QQQ$ be the ground state given in Proposition \ref{Pq}, 
    and $$w_{\QQQ}^2=\frac{(4-\gamma)~||\QQQ||_{\dot{H}_V^1(\R^d)}^2}{\gamma~||\QQQ||_{L_x^2(\R^d)}^2}.$$ 
    From \eqref{pohozhaev0} we find
    \[
    \int_{\R^d}(2V + x \cdot \nabla V)|\QQQ|^2dx = 0.
    \]
    Thus we have
    \begin{align*}
        \Vert \QQQ \Vert_{\dot{H}_V^1(\R^d)}^2=\frac{\gamma w_{\QQQ}^2 }{4-\gamma}\Vert \QQQ \Vert_{L_x^2(\R^d)}^2, 
        \quad P(\QQQ)=\frac{4 w_{\QQQ}^2}{4-\gamma}\Vert \QQQ \Vert_{L_x^2(\R^d)}^2.
    \end{align*}
\end{proof}

\begin{remark}
    If $V_{-}=0$, according to Proposition \ref{Pq}, the ground state $Q$ satisfies \eqref{elliptic}. By the similar method, we find the pohozhaev identity 
    \begin{align*}
        \Vert Q \Vert_{\dot{H}_V^1(\R^d)}^2=\frac{\gamma}{4-\gamma}\Vert Q \Vert_{L_x^2(\R^d)}^2,  \quad P(Q)=\frac{4}{4-\gamma}\Vert Q \Vert_{L_x^2(\R^d)}^2.
    \end{align*}
    and then 
    \begin{align*}
         P(Q)=\frac{8}{\gamma-2}E_V(Q).
    \end{align*}
\end{remark}


\subsection{Virial identity} 
We will discuss the  virial identity for that our main results are closely based on the property of it. 
We have
    \begin{align*}
        I(t)=\int_{\R^d}|x|^2|u(t,x)|^2dx,
    \end{align*}
for $xu \in L_x^2(\R^d)$.

A natural question is that we want to make some estimates for $I(t)$. 
By the accurate calculation, we summarize  the first and second derivatives of $I(t)$ as follows, which will be used throughout our proof.
\begin{lemma}\label{Lviral}
    Assume that $u(t,x)$ is the solution to \eqref{NLHv}. 
    Then we have
    \begin{align*}
        I'(t)=4\Im \int_{\R^d}x\bar{u}\nabla udx,
    \end{align*}
    and
    \begin{align*}
        I''(t)=&~8\Vert \nabla u \Vert_{L_x^2(\R^d)}^2-4\int_{\R^d}x\cdot \nabla V |u|^2dx-2\gamma P(u)\\
        =&~8\Vert u\Vert_{\dot{H}_V^1(\R^d)}^2-4\int_{\R^d}x\cdot \nabla V |u|^2dx-8\int_{\R^d}V|u|^2dx-2\gamma P(u)\\
        =&~8\Vert u\Vert_{\dot{H}_V^1(\R^d)}^2-2\gamma P(u)-e(t),
    \end{align*}
    where $$e(t) := 4\int_{\R^d}(2V+x\cdot \nabla V)|u|^2dx.$$
\end{lemma}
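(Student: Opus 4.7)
The plan is to obtain both formulas by direct differentiation of $I(t)=\int_{\R^d}|x|^2|u|^2\,dx$ under the integral sign, using the equation $i\partial_t u = -\Delta u + Vu -(|\cdot|^{-\gamma}*|u|^2)u$ to trade time derivatives for spatial ones, then integrating by parts. Throughout, the hypothesis $xu \in L_x^2(\R^d)$ together with $u\in H_x^1$, the Kato assumption on $V$, and the Hardy--Littlewood--Sobolev bound on $P(u)$, provide enough decay to justify every integration by parts (formally one truncates $|x|^2$ by a smooth cutoff $\chi_R(x)$ and passes to the limit $R\to\infty$).

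\textbf{Step 1 (first derivative).} Writing $|u|^2 = u\bar u$, I obtain $I'(t) = 2\Re \int |x|^2 \bar u\,\partial_t u\,dx$. Substituting $\partial_t u = i\Delta u - iVu + i(|\cdot|^{-\gamma}*|u|^2)u$, the $V$-contribution and Hartree contribution inside the integral are purely imaginary (since $V|u|^2$ and $(|\cdot|^{-\gamma}*|u|^2)|u|^2$ are real), hence drop out under $\Re$. Only $i\Delta u$ survives, giving $I'(t) = -2\Im \int |x|^2\bar u\,\Delta u\,dx$. One integration by parts produces $\int |x|^2\bar u\,\Delta u\,dx = -2\int x\bar u\cdot\nabla u\,dx - \int |x|^2|\nabla u|^2\,dx$; the second term is real, so $I'(t) = 4\Im \int x\bar u\cdot\nabla u\,dx$.

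\textbf{Step 2 (second derivative).} Differentiating once more gives $I''(t) = 4\Im \int x\cdot(\partial_t\bar u\,\nabla u + \bar u\,\nabla\partial_t u)\,dx$. Substituting the equation and its conjugate splits $I''(t)$ into three contributions coming from $\Delta$, from $V$, and from the Hartree nonlinearity. The Laplacian piece, after the standard free-Schr\"odinger virial manipulation (two integrations by parts), contributes $8\int|\nabla u|^2\,dx$. The potential piece collapses via $V\bar u\,\nabla u - Vu\,\nabla\bar u = 2iV\,\Im(\bar u\,\nabla u)$ followed by an integration by parts that transfers $x\cdot\nabla$ from $|u|^2 V$ onto $V$; this produces $-4\int x\cdot\nabla V\,|u|^2\,dx$. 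For the Hartree piece, starting from a term of the shape $\Im \int x\cdot\bar u\,\nabla[(|\cdot|^{-\gamma}*|u|^2)u]\,dx$, I apply the Leibniz rule and then symmetrize the resulting double integral in $(x,y)$ under the interchange $x\leftrightarrow y$, using $\nabla_x|x-y|^{-\gamma} = -\gamma(x-y)|x-y|^{-\gamma-2}$; this extracts the factor $\gamma$ and yields $-2\gamma P(u)$. Adding the three contributions gives the first stated formula for $I''(t)$. The second formula follows by adding and subtracting $8\int V|u|^2\,dx$ to convert $8\int|\nabla u|^2\,dx$ into $8\|u\|_{\dot H_V^1(\R^d)}^2$, and the third is the repackaging of all $V$-terms into $e(t) = 4\int(2V + x\cdot\nabla V)|u|^2\,dx$.

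\textbf{Main obstacle.} The most delicate computation is the Hartree contribution: obtaining the exact coefficient $2\gamma$ depends on a careful symmetrization of the $(x,y)$-double integral and clean bookkeeping for the Leibniz rule applied to $\nabla[(|\cdot|^{-\gamma}*|u|^2)u]$, splitting it into the derivative falling on the convolution and on the outer factor $u$. A secondary technical point is that because $V\in\KKK_0\cap L_x^{d/2}(\R^d)$ is not smooth, the quantity $x\cdot\nabla V$ must be interpreted distributionally; this is unproblematic in the later applications since Theorem \ref{Tconclusion} imposes sign/integrability conditions on $2V + x\cdot\nabla V$ precisely so that the identity above closes and $e(t)$ is meaningful.
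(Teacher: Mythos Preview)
Your proof is correct and is the standard derivation of the virial identity for Hartree-type equations; in particular your symmetrization in $(x,y)$ to extract the factor $-2\gamma P(u)$ and your handling of the potential term are accurate. Note that the paper itself states Lemma~\ref{Lviral} without proof (treating it as a routine computation), so there is no alternative argument to compare against; your write-up simply supplies the omitted details.
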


Indeed, we incorporate all terms involving the potential $V$ into $e(t)$. 
And $e(t)$ is  entirely dependent on $2V+x\cdot \nabla V$, which have been provided  in Theorem \ref{Tconclusion}.
So we can primarily focus on estimating the rest terms.   
We denote that 
\begin{align}\label{i+e}
    \widetilde{I''(t)}=I''(t)+e(t)=8\Vert u\Vert_{\dot{H}_V^1(\R^d)}^2-2\gamma P(u).
\end{align}
Rewrite \eqref{energy} as 
$$E_V(u)=\frac{1}{2}\Vert u \Vert_{\dot{H}_V^1(\R^d)}^2-\frac{1}{4}P(u).$$ 
Then combining the above two equations, we find 
\begin{align}\label{pE_V}
    P(u) = \frac{1}{2(\gamma-2)}\bigl(16E_V(u)-\widetilde{I''(t)}\bigr)
\end{align}
and
\begin{align}\label{uE_V}
    \Vert u \Vert_{\dot{H}_V^1(\R^d)}^2 = \frac{8\gamma E_V(u)-\widetilde{I''(t)}}{4(\gamma -2)}.
\end{align}

Next, according to \eqref{pE_V} and \eqref{uE_V}, we come to find the relation between $I'(t)$, $P(u)$ and $\Vert u \Vert_{\dot{H}_V^1(\R^d)}^2$, which is important to our proof of Theorem \ref{Tconclusion}.
\begin{lemma}\label{Lim2}
    \begin{align*}
        \biggl(\Im \int x \bar{u} \nabla u dx \biggr)^2 \leq \int |x|^2 |u|^2dx \biggl(\Vert u \Vert_{\dot{H}_V^1(\R^d)}^2-\frac{[P(u)]^{\frac{2}{\gamma}}}{C_\QQQ[M(u)]^{\frac{4-\gamma}{\gamma}}} \biggr).
    \end{align*}
\end{lemma}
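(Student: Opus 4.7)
\textbf{Proof proposal for Lemma \ref{Lim2}.} The plan is to use a phase modulation (``gauge transform'') together with the sharp Gagliardo--Nirenberg inequality to produce a quadratic in a real parameter $\lambda$ that must be non-negative, and then read off the result from the sign of its discriminant. Concretely, I will test the inequality on $v_\lambda(x) := e^{i\lambda|x|^2/2}u(x)$.

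First I would record three basic facts about $v_\lambda$. Since $|v_\lambda| = |u|$, the mass, the Hartree functional, and the potential energy are unchanged: $M(v_\lambda) = M(u)$, $P(v_\lambda) = P(u)$, and $\int V|v_\lambda|^2\,dx = \int V|u|^2\,dx$. Writing $\nabla v_\lambda = e^{i\lambda|x|^2/2}(\nabla u + i\lambda x\, u)$, a direct expansion gives
\begin{equation*}
\|v_\lambda\|_{\dot H_V^1(\R^d)}^2 \;=\; \|u\|_{\dot H_V^1(\R^d)}^2 \;+\; 2\lambda\,\Im\!\int x\,\bar u\,\nabla u\,dx \;+\; \lambda^2\!\int |x|^2|u|^2\,dx.
\end{equation*}
This is the only place where the phase alters the energy, and it is precisely where the two sides of the desired inequality will meet.

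Next I would apply the sharp Gagliardo--Nirenberg inequality \eqref{p<c} to $v_\lambda$, raised to the power $2/\gamma$, which yields
\begin{equation*}
\frac{[P(u)]^{2/\gamma}}{C_{\QQQ}\,[M(u)]^{(4-\gamma)/\gamma}} \;=\; \frac{[P(v_\lambda)]^{2/\gamma}}{C_{\QQQ}\,[M(v_\lambda)]^{(4-\gamma)/\gamma}} \;\le\; \|v_\lambda\|_{\dot H_V^1(\R^d)}^2.
\end{equation*}
Substituting the identity from the previous step and rearranging, I obtain
\begin{equation*}
\lambda^2\!\int |x|^2|u|^2\,dx \;+\; 2\lambda\,\Im\!\int x\,\bar u\,\nabla u\,dx \;+\; \left(\|u\|_{\dot H_V^1(\R^d)}^2 - \frac{[P(u)]^{2/\gamma}}{C_{\QQQ}\,[M(u)]^{(4-\gamma)/\gamma}}\right) \;\ge\; 0
\end{equation*}
for every $\lambda \in \R$. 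The claim is then immediate from the non-positivity of the discriminant of this quadratic in $\lambda$.

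I do not anticipate a serious obstacle: $\|v_\lambda\|_{\dot H_V^1}$ is finite for every $\lambda$ whenever $|x|u \in L^2_x$ (which is precisely the hypothesis of the lemma), so the Gagliardo--Nirenberg inequality is legitimately available for $v_\lambda$, and the positivity of $\HHH$ from Lemma \ref{A1} ensures $\|v_\lambda\|_{\dot H_V^1}^2 \ge 0$ so that the quadratic really is bounded below. The one small subtlety worth double-checking is that $C_{\QQQ}$ is still the sharp constant when applied to complex-valued $v_\lambda$, but this is built into the variational characterization of $\QQQ$ since the supremum defining $C_{GN}$ is taken over all of $H_x^1(\R^d)\setminus\{0\}$.
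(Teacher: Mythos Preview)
Your proposal is correct and follows essentially the same approach as the paper: both test the sharp Gagliardo--Nirenberg inequality on the phase-modulated function $e^{i\lambda|x|^2}u$ (you use the harmless normalization $e^{i\lambda|x|^2/2}$), expand $\|v_\lambda\|_{\dot H_V^1}^2$ as a quadratic in $\lambda$, and read off the conclusion from the non-positivity of the discriminant. Your write-up is in fact cleaner than the paper's, which contains some garbled notation in the pointwise expansion of $|\nabla(e^{i\lambda|x|^2}u)|^2$.
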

\begin{proof}
    For $e^{i \lambda |x|^2}u$, we can compute that 
    \begin{align*}
        \bigl|\nabla (e^{i \lambda |x|^2}u)\bigr|^2 = & ~\bigl[ (\nabla e^{i \lambda |x|^2})u + e^{i \lambda |x|^2}\nabla u\bigr]^2\\
        = &~ \bigl[ (\nabla e^{i \lambda |x|^2})u\bigr]^2 + 2 (\nabla e^{i \lambda |x|^2})u \cdot e^{i \lambda |x|^2}\nabla u + \bigl[ e^{i \lambda |x|^2}\nabla u\bigr]^2\\
        = &~ \bigl[ e^{i \lambda |x|^2} \cdot \nabla (i \lambda |x|^2) \cdot u \bigr]^2 + 2e^{i \lambda |x|^2} \cdot \nabla (i \lambda |x|^2) \cdot u \nabla u + (\nabla u )^2\\
        =& ~\bigl[ i\lambda (\nabla |x|^2) u \bigr]^2+2i\lambda \nabla |x|^2 \cdot u \cdot \nabla u + (\nabla u )^2\\
        =&~\bigl[ 2\lambda(\Im \nabla x \cdot \bar{x}) \cdot u\bigr]^2+4\lambda (\Im \nabla x \cdot \bar{x}) \cdot u \cdot \nabla u + (\nabla u )^2.
    \end{align*}
    Then we have the $\dot{H}_V^1$ norm of $e^{i \lambda |x|^2}u$, that is 
    \begin{align*}
        \Vert e^{i \lambda |x|^2}u \Vert_{\dot{H}_V^1(\R^d)} 
        = &~ \int \bigl|\nabla (e^{i \lambda |x|^2}u)\bigr|^2 dx + \int V(x) \bigl|e^{i \lambda |x|^2}u\bigr|^2 dx\\
        =& ~4\lambda^2\int |x|^2 |u|^2dx+4\lambda \Im \int x\bar{u} \nabla u dx + \int (|\nabla u|^2 + V|u|^2)dx\\
        =& ~4\lambda^2\int |x|^2 |u|^2dx+4\lambda \Im \int x\bar{u} \nabla u dx + \Vert u \Vert_{\dot{H}_V^1(\R^d)}^2.
    \end{align*}
    We substitute $e^{i \lambda |x|^2}u$ for $u$ in \eqref{p<c},  that is
    \begin{align*}
        P(e^{i \lambda |x|^2}u) \leq C_{GN} \bigl\Vert e^{i \lambda |x|^2}u \bigr\Vert_{\dot{H}_V^1(\R^d)}^{\gamma} \bigl\Vert e^{i \lambda |x|^2}u \bigr\Vert_{L_x^2(\R^d)}^{4-\gamma}.
    \end{align*}
    Thus, according to $  C_\QQQ=(C_{GN})^{\frac{2}{\gamma}}$, 
    \begin{align}\label{cq-p>0}
      C_\QQQ \Vert u \Vert_{L_x^2(\R^d)}^{\frac{2(4-\gamma)}{\gamma}}\bigl[4\lambda^2\int |x|^2 |u|^2dx+4\lambda \Im \int x\bar{u} \nabla u dx + \Vert u \Vert_{\dot{H}_V^1(\R^d)}^2 \bigr]-  [P(u)]^{\frac{2}{\gamma}} \geq 0
    \end{align}
  for any $\lambda \in \R$.  We find that the left side of \eqref{cq-p>0} is a quadratic function in $\lambda$.  The discriminant of this function in $\lambda$ must be negative, which yields Lemma  \ref{Lim2}.  
\end{proof}
\begin{remark}
    In order to describe more visually, 
we rewrite Lemma \ref{Lim2} by \eqref{pE_V} and \eqref{uE_V}. 
It follows that 
\begin{align}\label{i'2}
    [I'(t)]^2 \leq 16 I(t)\biggl[ \frac{8\gamma E_V(u)-\widetilde{I''(t)}}{4(\gamma-2)} - \frac{1}{C_\QQQ[M(u)]^{\frac{4-\gamma}{\gamma}}}\biggl(  \frac{16E_V(u)-\widetilde{I''(t)}}{2(\gamma-2)} \biggr)^{\frac{2}{\gamma}} \biggr].
\end{align}
We denote that 
\begin{align*}
    f(x)= \frac{2\gamma }{\gamma-2}E_V(u) - \frac{1}{4(\gamma-2)}x- \frac{1}{C_\QQQ[M(u)]^{\frac{4-\gamma}{\gamma}}}\biggl(  \frac{16E_V(u)-x}{2(\gamma-2)} \biggr)^{\frac{2}{\gamma}}
\end{align*}
for any $x\in \left( -\infty, 16E_V(u) \right]$.
Thus, \eqref{i'2} can be simplified as 
\begin{align}\label{i'2new}
    [I'(t)]^2 \leq 16 I(t)f(\widetilde{I''(t)}).
\end{align}
\end{remark}

\section{Proof of the Main Theorem}
In this section, we will prove the main results in Theorem \ref{Tconclusion}. 

Let $z(t)=\sqrt{I(t)}$. By \eqref{i'2new}, we have
\begin{align}\label{z'2<4i''}
    [z'(t)]^2=\frac{[I'(t)]^2}{4I(t)} \leq 4f(\widetilde{I''(t)}).
\end{align}
It indicates that $z'(0)$ can be estimated by $f(\widetilde{I''(t)})$. 
Actually,  the condition $I'(0)$ in Theorem \ref{Tconclusion} is closely related to $z'(0)$. 
Thus, by \eqref{z'2<4i''}, we first need to discuss the property of $f(x)$ to pave the way for proving our main results.

We can compute that 
\begin{align*}
    f'(x) =- \frac{1}{4(\gamma-2)}+\frac{1}{C_\QQQ[M(u)]^{\frac{4-\gamma}{\gamma}}} \cdot \frac{2}{\gamma} \cdot \bigl[\frac{1}{2(\gamma-2)} \bigr]^{\frac{2}{\gamma}}\bigl[16E_V(u)-x\bigr]^{\frac{2}{\gamma}-1}.
\end{align*}
Since $\frac{2}{\gamma}-1 < 0 ~(s_c>0)$, $f(x)$ is decreasing on $(-\infty,x_0)$ and increasing on $(x_0,16E_V(u))$, where $x_0$ satisfies
\begin{align}\label{x_0}
    \frac{1}{4(\gamma-2)}=\frac{1}{C_\QQQ[M(u)]^{\frac{4-\gamma}{\gamma}}} \cdot \frac{2}{\gamma} \cdot \bigl[\frac{1}{2(\gamma-2)} \bigr]^{\frac{2}{\gamma}}\bigl[16E_V(u)-x_0\bigr]^{\frac{2}{\gamma}-1}.
\end{align}
Then 
\begin{align*}
    f(x_0) =&~ \frac{2\gamma }{\gamma-2}E_V(u) - \frac{1}{4(\gamma-2)}x_0- \frac{1}{C_\QQQ[M(u)]^{\frac{4-\gamma}{\gamma}}}\biggl(  \frac{16E_V(u)-x_0}{2(\gamma-2)} \biggr)^{\frac{2}{\gamma}}\\
    =&~ \frac{2\gamma }{\gamma-2}E_V(u) - \frac{1}{4(\gamma-2)}x_0 - \frac{1}{4(\gamma-2)}\cdot \frac{\gamma}{2} \cdot (16E_V(u)-x_0)\\
    =&~  \frac{x_0}{8}.
\end{align*}

Using \eqref{pE_V}, \eqref{uE_V} and the expression of $C_\QQQ$, we rewrite \eqref{x_0} as 
\begin{align}\label{meps1}
    \biggl( \frac{M(u)}{M(\QQQ)} \biggr)^{1-s_c} \biggl( \frac{E_V(u)-\frac{1}{16}x_0}{E_V(\QQQ)} \biggr)^{s_c} 
    = \MMM\EEE \biggl( 1- \frac{x_0}{16E_V(u)} \biggr)=1
\end{align}
for  $x_0\in \left( -\infty, 16E_V(u) \right]$.
Then  $\MMM\EEE >1$ is equivalent to $x_0 \geq 0$.
We can also rewrite \eqref{me<1} as 
\begin{align}\label{z'2}
    [z'(0)]^2 \geq \frac{x_0}{2}=4f(x_0).
\end{align}

\subsection{The proof of blow up.} 
Using the Hardy inequality 
\[\int_{0}^\infty\biggl[\frac{1}{x}\int_{0}^xg(\xi)d\xi\biggr]^pdx\leq \biggl(\frac{p}{p-1}\biggr)^p\int_{0}^\infty g^p(x)dx, \quad g(x)\geq 0, \quad p>1, \]
and the conservation of mass, we have
\begin{align*}
    \Vert u_0 \Vert_{L_x^2(\R^d)}^2=\int_{\R^d}|u(x,t)|^2dx
    \leq \Vert xu(t) \Vert_{L_x^2(\R^d)}\Vert u(t)\Vert_{\dot{H}_x^1(\R^d)} \to 0, 
    \quad t \to T_+(u),
\end{align*}
where  we have used the fact that  $u_0 \in \Sigma$  implies  the corresponding solution $u$ belongs to $\Sigma$.
On the basis of  Glassey's classical argument \cite{glassey}, our proof of blow up is to prove
\begin{align*}
    I''(t) < 0, \quad \forall ~ t \in \left[ 0,T_+(u) \right).
\end{align*}
To that end, we  demonstrate $z''(t)$ for that  $z(t)=\sqrt{I(t)}$.  

Firstly, we  make some  equivalent transformation to the conditions about the blow up in  Theorem \ref{Tconclusion}. 
The assumption $I'(0)\leq 0$ implies $z'(0)\leq 0$.
According to  \eqref{pe} and  \eqref{meps1},  the  assumption \eqref{mpu>mpq} implies that
\begin{align*}
   \biggr[ \frac{M(u_0)}{M(\QQQ)} \biggl]^{1-s_c} \biggr[ \frac{\frac{\gamma-2}{8}P(u_0)}{E_V(\QQQ)} \biggl] 
   > 1 = \biggr[ \frac{M(u)}{M(\QQQ)} \biggl]^{1-s_c} \biggl[ \frac{E_V(u)-\frac{1}{16}x_0}{E_V(\QQQ)} \biggr]^{s_c}. 
\end{align*}
Then by \eqref{pE_V},  we get 
\begin{align}\label{i''0<x0}
    \widetilde{I''(0)}< x_0.
\end{align}
Since $2V+x\cdot \nabla V \geq 0$, we find $e(0) \geq 0$. Then $\widetilde{I''(0)}=I''(0)+e(0) < x_0$ yields that $I''(0)<x_0$. Thus, we have
\begin{align*}
    z''(0)=&~\frac{1}{z(0)}\bigl( \frac{I''(0)}{2} -(z'(0))^2 \bigr)\\
    <&~\frac{1}{z(0)}\bigl( \frac{x_0}{2} -(z'(0))^2 \bigr)\\
    \leq&~ \frac{1}{z(0)}\bigl(  \frac{x_0}{2}- \frac{x_0}{2} \bigr)=0.
\end{align*}

 We ~claim ~that
 \begin{align}\label{Cz''0}
\bm{z''(t)<0,\quad \forall ~ t \in \left[ 0,T_+(u) \right)} . 
\end{align}

Indeed, if the claim holds, we assume that   $T_+(u)= \infty$.
By $z'(0)<0$ and $z''(t)<0$,  we have $z'(1)<0$. Then
\begin{align*}
    z(t)=&~\int_1^t z'(s)ds+z(1)\\
    \leq &~\int_1^t z'(1)ds+z(1)\\
    \leq &~z'(1)(t-1)+z(1).
\end{align*}
For $t \to \infty$, $z(t)<0$ is contradicted to $z(t)>0$. Thus $T_+(u)<\infty$, which implies that $z(t)$ will approach $0$ in a finite time. 

Now we come to prove the claim \eqref{Cz''0}.
If the claim does not hold,  there exists $t_0=\sup \{t\in (0, T_+(u)),z''(0)\geq 0\}$ satisfying $z''(t)<0$ for any $t\in\left[ 0,t_0 \right)$. And by the continuity of $z''(t)$, we have $$z''(t_0)=0.$$
Using $z'(0) \leq 0$ and \eqref{z'2}, we have
\begin{align*}
    z'(t)<z'(0)\leq 0, \quad \forall ~ t \in [0,t_0],
\end{align*}
and
\begin{align*}
    4f(x_0) \leq [z'(0)]^2 \leq [z'(t)]^2, \quad \forall ~ t \in [0,t_0].
\end{align*}
Combining with \eqref{z'2<4i''}, we obtain $f(\widetilde{I''(t)}) > f(x_0)$ for any $t \in [0,t_0]$. 
Then $\widetilde{I''(t)} \neq x_0$ for any $t \in [0,t_0]$. 
According to \eqref{i''0<x0} and the continuity of $\widetilde{I''(t)}$, we have 
\begin{align}\label{i''t<x0}
    \widetilde{I''(t)} < x_0, \quad \forall ~ t \in [0,t_0].
\end{align}
Furthermore, $2V+x\cdot \nabla V \geq 0$ yields that $e(t) \geq 0$. From \eqref{i+e}, We find  
\begin{align*}
     I''(t) = \widetilde{I''(t)} - e(t)<x_0, \quad \forall ~ t \in [0,t_0].
\end{align*}
Therefore, 
\begin{align*}
    z''(t_0)=&~\frac{1}{z(t_0)}\bigl( \frac{I''(t_0)}{2} -(z'(t_0))^2 \bigr)\\
    <&~\frac{1}{z(t_0)}\bigl( \frac{x_0}{2} -(z'(t_0))^2 \bigr)\\
    \leq&~ \frac{1}{z(t_0)}\bigl(  \frac{x_0}{2}- \frac{x_0}{2} \bigr)=0,
\end{align*}
which is contradicted to $z''(t_0)=0$.
Thus the claim holds.

Consequently, the proof of Part $(i)$ in Theorem \ref{Tconclusion}
 is completed.
 
\subsection{Global existence.}
We first convert the conditions related to the global existence equivalently. The assumption $I'(0)\geq 0$  implies  $z'(0)\geq 0$.  The assumption \eqref{mpu<mpq} implies 
$$\widetilde{I''(0)}>x_0.$$
Furthermore,  $2V+x\cdot \nabla V \leq 0$ ensures that $e(t)\leq 0$. Then $$I''(x_0) = \widetilde{I''(0)}-e(0) >x_0.$$

Using \eqref{z'2} and $z'(0) \geq 0$, we derive
\begin{align}\label{sqrtx_0/2}
    z'(0)\geq \sqrt{\frac{x_0}{2}}.
\end{align}
This leads to the existence of $t_1 \geq 0$ such that 
\begin{align}\label{z't1}
    z'(t_1) > \sqrt{\frac{x_0}{2}}=2\sqrt{f(x_0)}.
\end{align}
If $ z'(0)$ strictly exceeds $\sqrt{\frac{x_0}{2}}$, we can choose $t_1=0$.  If $z'(0)=\sqrt{\frac{x_0}{2}}$, we find
\begin{align*}
    z''(0)=~\frac{1}{z(0)}\bigl( \frac{I''(0)}{2} -(z'(0))^2 \bigr)
    >~\frac{1}{z(0)}\bigl( \frac{x_0}{2} -(z'(0))^2 \bigr)
    =0.
\end{align*}
Then for small $t_1>0$, we can select a small parameter $\eps_1>0$ such that
\begin{align*}
    z'(t_1)=2\sqrt{f(x_0)}+2\eps_1.
\end{align*}

\textbf{We claim that}
\begin{align}\label{Cz'}
    \bm{z'(t) > 2\sqrt{f(x_0)}+2\eps_1,\quad \forall ~ t\geq t_1}.
\end{align}

 Indeed, if the claim does not hold, there exists $t_2= \inf \{t>t_1:z'(t)\leq 2\sqrt{f(x_0)}+\eps_1\}$.
By the continuity of $z'(t)$, we have
\begin{align}\label{z't2}
    z'(t_2)=2\sqrt{f(x_0)}+\eps_1
\end{align}
and
\begin{align}\label{z'int1t2}
    z'(t) \geq 2\sqrt{f(x_0)}+\eps_1, \quad \forall ~ t \in [t_1,t_2].
\end{align}
According to \eqref{z'2<4i''}, we rewrite \eqref{z'int1t2} as
\begin{align}\label{2fx_0}
    (2\sqrt{f(x_0)}+\eps_1)^2 \leq (z'(t))^2 \leq 4f(\widetilde{I''(t)}), \quad \forall ~ t \in [t_1,t_2],
\end{align}
we find that $f(\widetilde{I''(t)})>f(x_0)$ for all $t \in [t_1,t_2]$.
Hence
\begin{align*}
    \widetilde{I''(t)}\neq x_0, \quad \forall ~ t \in [t_1,t_2].
\end{align*}
Combining with  $\widetilde{I''(0)}>x_0$, 
we get 
\begin{align*}
   \widetilde{I''(t)}>x_0, \quad \forall ~ t \in [t_1,t_2]. 
\end{align*}

Then again, there could exist a constant $C$ such that
\begin{align}\label{i''c}
    \widetilde{I''(t)} \geq x_0 + \frac{\sqrt{\eps_1}}{C},  \quad \forall ~ t \in [t_1,t_2].
\end{align}
If $\widetilde{I''(t)} \geq x_0 +1$, then \eqref{i''c} holds for $C$ large enough. 
If $x_0< \widetilde{I''(t)} \leq x_0+1$, by the Taylor equation of $f$ around $x=x_0$, there exists $a>0$ such that
\begin{align*}
    f(x)=f(x_0)+a(x-x_0)^2~\rm{when}~|\mathit{x-x}_0|\leq 1.
\end{align*}
Substituting this equality for \eqref{2fx_0} with $x=\widetilde{I''(t)}$, we have 
\begin{align}\label{i''cnew}
    (2\sqrt{f(x_0)}+\eps_1)^2 \leq (z'(t))^2 \leq 4f(x_0)+4a(\widetilde{I''(t)}-x_0)^2.
\end{align}
Combining  \eqref{i''c} and \eqref{i''cnew}, we obtain $C=2\sqrt{a}(4(f(x_0))^{\frac{1}{2}}+\eps_1)^{-\frac{1}{2}}$.
Thus \eqref{i''c} holds.

By \eqref{i''c} and $e(t) \leq 0$, we find 
\begin{align}\label{i''c1}
    x_0 + \frac{\sqrt{\eps_1}}{C} \leq \widetilde{I''(t)} = I''(t) +e(t) \leq I''(t), \quad \forall ~ t \in [t_1,t_2].
\end{align}
However, by \eqref{z't2} and \eqref{i''c1} we have
\begin{align*}
    z''(t_2)=&~\frac{1}{z(t_2)}\bigl(  \frac{I''(t_2)}{2}-(z'(t_2))^2  \bigr)\\
    \geq &~ \frac{1}{z(t_2)}\bigl(  \frac{\sqrt{\eps_1}}{2C} -4\eps_1 \sqrt{f(x_0)} -\eps_1^2 \bigr)\\
    >&~\frac{1}{z(t_2)}\frac{\sqrt{\eps}}{4C},
\end{align*}
where $\eps<\eps_1$ is small enough. Then we get $z''(t_2)>0$, which contradicts with \eqref{z't2} and \eqref{z'int1t2}.
So we obtain the claim.

Next we use the claim \eqref{Cz'} to prove \eqref{sup} in Theorem \ref{Tconclusion}.
We note that \eqref{i''c} holds for all $t \in \left[ t_1,T_+(u) \right)$. Hence, we obtain
\begin{align*}
    [M(u)]^{1-s_c}[P(u)]^{s_c}=&~[M(u)]^{1-s_c}[\frac{1}{2(\gamma-2)}(16E_V(u))-\widetilde{I''(t)}]^{s_c}\\
    \leq &~ [M(u)]^{1-s_c} [\frac{1}{2(\gamma-2)}(16E_V(u))-x_0 - \frac{\sqrt{\eps_1}}{C}]^{s_c}\\
    <&~ [M(u)]^{1-s_c} [\frac{1}{2(\gamma-2)}(16E_V(u)-x_0 )]^{s_c}\\
    =&~[M(\QQQ)]^{1-s_c}[P(\QQQ)]^{s_c}.
\end{align*}
Then by mass and energy conservation, we have
\begin{align*}
    \Vert u \Vert_{\dot{H}_V^1(\R^d)} = 2E_V(u)+\frac{1}{2}P(u) 
    \leq 2E_V(u)+\frac{M(\QQQ)^{\frac{1-s_c}{s_c}}P(\QQQ)}{M(u_0)^{\frac{1-s_c}{s_c}}}
    <A
\end{align*}
for all $t\in\left[ t_1,T_+(u) \right)$, where constant $A$ depending on $M(u_0)$, $E_V(u_0)$, $M(\QQQ)$ and $E_V(\QQQ)$.
So $u(t,x)$ exists globally.\\

\noindent{\bf Acknowledgement} The author Jing Lu was supported by the National Natural Science Foundation of China (No. 12101604).

\end{document}